\documentclass[amstex,12pt,reqno]{amsart}
\usepackage{amsmath,amsthm,amscd,amsfonts,amssymb,enumitem}
\usepackage{latexsym}
\usepackage{multirow}
\usepackage{lipsum} 
\usepackage{enumitem}
\usepackage[colorlinks=true, linkcolor=blue, anchorcolor=blue, citecolor=blue, filecolor=blue, urlcolor=blue]{hyperref}

\newtheorem{thm}{Theorem}[section]
\newtheorem{lemma}[thm]{Lemma}

\theoremstyle{definition}
\newtheorem{defin}[thm]{Definition}

\setcounter{assmp}{-1}

\theoremstyle{remark}

\newcommand{\qedwhite}{\hfill \ensuremath{\Box}}
\renewenvironment{proof}{{\raggedright \bfseries Proof.}}{\qedwhite}

\newcommand{\floor}[1]{\left\lfloor {#1} \right\rfloor}
\newcommand{\ceil}[1]{\left\lceil {#1} \right\rceil}
\newcommand\numberthis{\addtocounter{equation}{1}\tag{\theequation}}
\setlength{\parskip}{4pt plus 2pt minus 1 pt}
\numberwithin{equation}{section}

\evensidemargin 2mm
\oddsidemargin  2mm
\textwidth  155mm
\textheight 250mm
\topmargin -1.8cm

\def\Chi{\mbox{\large$\chi$}}
\begin{document}

\leftline{ \scriptsize \it  }
\title[]{Weighted approximation By Max-product Generalized Exponential Sampling Series}
\maketitle

\begin{center}
    \bf  $^1${Satyaranjan Pradhan},
    \bf  $^2${Madan Mohan Soren}
    \vskip0.2in
    $^{1,2}${Department of Mathematics, Berhampur University,Odisha-760007} \\
    \verb"satyaranjanpradhan6312@gmail.com",
    \verb"mms.math@buodisha.edu.in "
\end{center}

\begin{abstract}
In this article, we study the convergence behaviour of the classical generalized Max Product exponential sampling series in the weighted space of log-uniformly continuous and bounded functions. We derive basic convergence results for both the series and study the asymptotic convergence behaviour. Some quantitative approximation results have been obtained utilizing the notion of weighted logarithmic modulus of continuity.
\\            
\noindent Keywords: Exponential sampling series, weighted spaces, weighted logarithmic modulus of continuity,Max-Product operator, order of Convergence, Mellin theory.
\vskip0.001in
\noindent 2020 Mathematics Subject Classification: 41A25, 41A35, 41A30. 
 
\end{abstract}


\section{Introduction} 
The problem of sampling and reconstruction of functions is a fundamental aspect of approximation theory, with important applications in signal analysis and image processing (\cite{apl1,apl2}). A significant breakthrough in sampling and reconstruction theory was collectively achieved by Whittaker-Kotelnikov-Shannon. They established that any band-limited signal $f$, i.e. the Fourier transform of $f$ is compactly supported, can be completely recovered using its regularly spaced sample values (see \cite{srvbut}). This result is widely known as \textit{WKS sampling theorem}. Butzer and Stens \cite{but1} generalized this result significantly for not-necessarily band-limited signals. Since then, several mathematicians have been making significant advancements in this direction, see \cite{butzer2,k2007,tam}. 
\par

The problem of approximating functions with their exponentially-spaced sample values can be traced back to the work of Ostrowski et.al. \cite{ostrowsky}, Bertero and Pike \cite{bertero}, and Gori \cite{gori}. In order to deal with exponentially-spaced data, they provided a series representation for the class of Mellin band-limited functions (defined in Section \ref{section2}). This reconstruction formula is referred as the \textit{exponential sampling formula} and defined as follows. For $f:\mathbb{R}^{+} \rightarrow \mathbb{C}$ and $c \in \mathbb{R},$ the exponential sampling formula is given by (see \cite{butzer3})

\begin{equation} \label{expformula}
(E_{c,T}f)(x):= \sum_{k=-\infty}^{\infty} lin_{\frac{c}{T}}(e^{-k}x^{T}) f(e^{\frac{k}{T}})
\end{equation}
where $lin_{c}(x)= \dfrac{x^{-c}}{2\pi i} \dfrac{x^{\pi i}-x^{-\pi i}}{\log c} = x^{-c} sinc(\log x)$ with continuous extension $lin_{c}(1)=1.$ Moreover, if $f$ is Mellin band-limited to $[-T,T],$ then $(E_{c,T}f)(x)=f(x)$ for each $x \in \mathbb{R}^{+}.$
\par 

The exponentially spaced data can be observed in various problems emerging in optical physics and engineering, for example, Fraunhofer diffraction, polydispersity analysis by photon correlation spectroscopy, neuron scattering, radio astronomy, etc (see \cite{casasent,ostrowsky,bertero,gori}). Therefore, it became crucial to examine the extensions and variations of the exponential sampling formula \eqref{expformula}.Butzer and Jansche \cite{butzer5} investigated into the exponential sampling formula, incorporating the analytical tools of Mellin analysis. They established that the theory of Mellin transform provides a suitable framework to handle sampling and approximation problem related to exponentially-spaced data. The foundational work on the Mellin transform theory was initially undertaken by Mamedov \cite{mamedeo}. Subsequently, Butzer and his colleagues made significant contributions to the field of Mellin theory in \cite{butzer3,butzer5}. For some notable developments on Mellin theory, we refer to \cite{bardaro1,bardaro9,bardaro2,bardaro3} etc. In order to approximate a function which is not necessarily Mellin band-limited, the theory of exponential sampling formula \eqref{expformula} was extended in \cite{bardaro7} using generalized kernel satisfying suitable conditions. This gives a method to approximate the class of log-continuous functions by employing its exponentially spaced sample values. For $x \in \mathbb{R}^{+}$ and $w>0,$ the generalized exponential sampling series is given by (see \cite{bardaro7})

\begin{equation} \label{genexp}
(S_{w}^{\chi}f)(x)= \sum_{k=- \infty}^{\infty} \chi(e^{-k} x^{w}) f( e^{\frac{k}{w}})
\end{equation}

for any $ f: \mathbb{R}^{+} \rightarrow \mathbb{R}$ such that the series \eqref{genexp} converges absolutely. Various approximation properties associated with the family of operators \eqref{genexp} can be observed in \cite{comboexp,bardaro11,bevi,diskant}. The approximation properties of exponential sampling operators based on artificial neural network can be found in \cite{sn,self}. In order to approximate integrable functions, the series \eqref{genexp} is not suitable. To overcome with this, the following Kantorovich type modification of the family \eqref{genexp} was studied in \cite{own}. For $ x\in \mathbb{R}^{+}, k \in \mathbb{Z}$ and $w>0,$ the Kantorovich  exponential sampling series is defined by

\begin{equation} \label{kant}
(I_{w}^{\chi}f)(x):= \sum_{k= - \infty}^{\infty} \chi(e^{-k} x^{w})\  w \int_{\frac{k}{w}}^{\frac{k+1}{w}} f(e^{u})\  du \ \
\end{equation}

whenever the series (\ref{kant}) is absolutely convergent for any locally integrable function $ f: \mathbb{R}^{+} \rightarrow \mathbb{R}.$\\
Recently, an interesting procedure was introduced by Coroianu and Gal, which allows to improve the order of approximation that can be achieved by a family of linear operators, and consists in the
so-called max-product approach, see e.g. Coroianu and Gal (2010)\cite{CG2010}, Coroianu and Gal (2011)\cite{CG2011} and Coroianu and Gal (2012)\cite{CG2012}. In general, discrete linear operators are defined by finite sums or series, with respect to certain indexes. The max-product operators are defined by replacing the sums or the series by a maximum or a supremum,computed over the same sets of indexes.The above procedure, allows to convert linear operators into nonlinear ones, which are able to achieve a higher order of approximation with respect to their linear counterparts; see, e.g.,Coroianu and Gal (2010)\cite{CG2010}, Coroianu and Gal (2011)\cite{CG2011}, and Coroianu and Gal (2012)\cite{CG2012}.
\par

In the present paper, we investigate the approximation behaviors of  Max Product generalized exponential sampling operators for functions belonging to weighted space of functions. More precisely, for the above operators the point-wise and uniform convergence are established together with the rate of convergence by involving the weighted modulus of continuity. Finally, we prove quantitative Voronovskaja type theorem for the operators
\par
The paper is organized as follows. The first and second sections are devoted to fundamentals of Max product generalized exponential sampling , as well as to some auxiliary results. The third section is devoted to well-definiteness of the operators between weighted spaces of functions, together with pointwise and uniform convergence. In Section 4, we present rate of convergence of the family of operators in terms of weighted modulus of continuity. The last section contains a pointwise convergence result in quantitative form by means of a Voronovskaja type theorem.

\section{Preliminaries}\label{section2} 

In this paper, let $\mathrm{I} = [a,b]$ be any compact subset of $\mathrm{R}_{+}$, Where $\mathrm{R}_{+}$ denotes the set of positive real numbers.We denote $\mathrm{C(I)}$ the space of uniformly continuous function on I, with usual supremum norm  \[\|f\|_{\infty} := \sup_{x\in I} |f(x)|\]
and $\mathrm{C_{+}(I)}$, the subspace of $\mathrm{C(I)}$ which contains non negative function defined on $\mathrm{I}$.\\
We now define log uniformly continuous functions.we say that a function $f:\mathrm{R}_{+} \rightarrow \mathrm{R}$ is log uniformly continuous function if for given $\epsilon > 0$ there exist $\delta >0$ such that $ |f(x) - f(y)| < \epsilon $ whenever $ |\log(x) -\log(y)| \leq \delta$ where $x,y \in \mathrm{R}_{+} $
\par
We denote $\mathcal{U}(\mathrm{I})$, the space of all log uniformly continuous function defined on $\mathrm{I}$ and  $\mathcal{U_{+}}(\mathrm{I})$, the space of all non negative log uniformly continuous functions on $\mathrm{I}$.we further denote $\mathrm{B_{+}}(\mathrm{I}) $, the space of all bounded and non-negative function on $\mathrm{I}$.
Now we define the notation which will be useful to analyze the max product sampling operators.\\
For any index set $ \Lambda \subseteq \mathrm{Z}$ we have \[ \bigvee_{\mathrm{l}\in \Lambda } \mathrm{T_{l}} := \sup\{ \mathrm{T_{l}}  :  \mathrm{l} \in \Lambda\} \]
If $\Lambda $ is finite , then \[ \bigvee_{\mathrm{l}\in \Lambda } \mathrm{T_{l}} = \max_{\mathrm{l}\in \Lambda } \mathrm{T_{l}} \]
\par
Now we introduce the functions that will be used as kernels of the max product generalized exponential sampling operator.\\
in what follows. We define a kernel any bounded and measurable function $\Chi :\mathrm{R}_{+} \rightarrow \mathrm{R} $ which satisfies following conditions.
\begin{enumerate}
    \item[$(\chi_{1}) :$] There exists $\mu >0$ ,such that the discrete absolute moment of order $\mu $  \[m_{\mu}(\Chi)= \sup_{u\in \mathrm{R_{+}}} \bigvee_{k\in z} |\Chi(e^{-k}u)||k-\log u|^{\mu} \;   \text{is finite for $\mu=2$}\].
    \item[$(\chi_{2}):$] There exist \[\inf_{x \in [1,e]} \Chi(x) =: \eta(x)\]
\end{enumerate}
	
\begin{lemma}
    Let $\Chi$ be a bounded function satisfying $(\Chi_{1})$ with $\mu > 0$.Then $m_{\nu}(\Chi) < \infty $ for every $ 0 \leq \nu \leq \mu $   
\end{lemma}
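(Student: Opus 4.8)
The plan is to dominate the $\nu$-th moment directly by the $\mu$-th moment together with the sup-norm of $\Chi$, by splitting the inner join over $k \in \ZZ$ according to the size of the base $|k - \log u|$. Since $0 \leq \nu \leq \mu$, the factor $|k - \log u|^{\nu}$ is controlled in two complementary regimes: when this base is at most $1$ it is harmless because $\nu \geq 0$, and when it exceeds $1$ it is dominated by the $\mu$-th power because $\nu \leq \mu$. In each regime one of the two available bounds takes over.

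First I would fix $u \in \RR_{+}$ and an index $k \in \ZZ$ and estimate the single term $|\Chi(e^{-k}u)|\,|k - \log u|^{\nu}$. If $|k - \log u| \leq 1$, then since $\nu \geq 0$ we have $|k - \log u|^{\nu} \leq 1$, so the term is at most $|\Chi(e^{-k}u)| \leq \|\Chi\|_{\infty}$, which is finite because $\Chi$ is bounded. If instead $|k - \log u| > 1$, then since $\nu \leq \mu$ we have $|k - \log u|^{\nu} \leq |k - \log u|^{\mu}$, whence the term is bounded by $|\Chi(e^{-k}u)|\,|k - \log u|^{\mu} \leq m_{\mu}(\Chi)$.

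Combining the two cases gives $|\Chi(e^{-k}u)|\,|k - \log u|^{\nu} \leq \max\{\|\Chi\|_{\infty},\, m_{\mu}(\Chi)\}$, and crucially this bound is uniform in both $k$ and $u$. Taking first the join over $k \in \ZZ$ and then the supremum over $u \in \RR_{+}$ therefore yields $m_{\nu}(\Chi) \leq \max\{\|\Chi\|_{\infty},\, m_{\mu}(\Chi)\}$, which is finite by hypothesis $(\Chi_{1})$ (which guarantees $m_{\mu}(\Chi) < \infty$ for $\mu = 2$, and more generally for the given $\mu$). There is no substantial obstacle here; the only points requiring care are that the dichotomy be applied pointwise so that the final bound survives passing to both suprema, and that the endpoint cases are genuinely covered — $\nu = 0$ is handled by the $|k - \log u| \leq 1$ estimate, which collapses to $\|\Chi\|_{\infty}$, while $\nu = \mu$ is immediate from the definition of $m_{\mu}(\Chi)$.
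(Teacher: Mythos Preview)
Your proof is correct and follows essentially the same approach as the paper: split according to whether $|k-\log u|\leq 1$ or $>1$, use boundedness of $\Chi$ on the first piece and $m_{\mu}(\Chi)$ on the second. The only cosmetic differences are that the paper writes the bound as a sum $m_{0}(\Chi)+m_{\mu}(\Chi)$ (noting that $m_{0}(\Chi)=\|\Chi\|_{\infty}$ since $\bigvee$ is a supremum) rather than your $\max\{\|\Chi\|_{\infty},m_{\mu}(\Chi)\}$, and that it splits the join first rather than arguing term-by-term.
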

\begin{proof}
    Let $ 0 \leq \nu \leq \mu $ be fixed.For every $u \in \mathrm{R_{+}}$ we have
    \begin{align*}
        \bigvee_{k\in z} |\Chi(e^{-k}u)||k-\log u|^{\nu} \leq & \bigvee_{k\in \mathbb{Z},|k-\log(u)| \leq 1} |\Chi(e^{-k}u)||k-\log u|^{\nu} \\ & + \bigvee_{k\in \mathbb{Z}, |k-\log(u)| > 1} |\Chi(e^{-k}u)||k-\log u|^{\nu}\\
        \leq &  m_{0}(\Chi) +  m_{\nu}(\Chi)\\
        \leq & \infty
    \end{align*}
\end{proof}
\begin{lemma}
    Let $\Chi : \mathrm{R_{+}}\rightarrow \mathrm{R}$ be a kernel satisfying that $ m_{\nu}(\Chi) < \infty, \nu > 0$. Then for every $\delta >0$, we have \[ \bigvee_{k\in \mathbb{Z},|k-\log(u)| > \delta \mathrm{w} } |\Chi(e^{-k}u)| = \mathcal{O}(\mathrm{w^{-\nu}}) \; \text{as $ \mathrm{w} \rightarrow \infty $}\] uniformly with respect to $\mathrm{u} \in \mathrm{R_{+}}$.
\end{lemma}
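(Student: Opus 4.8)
The plan is to read the conclusion straight out of the finiteness of the $\nu$-th discrete absolute moment; no approximation machinery is needed. The starting point is the defining inequality behind $m_{\nu}(\Chi)$: since
\[
m_{\nu}(\Chi) = \sup_{u \in \mathrm{R}_{+}} \bigvee_{k \in \mathbb{Z}} |\Chi(e^{-k}u)|\,|k - \log u|^{\nu} < \infty,
\]
for every $u \in \mathrm{R}_{+}$ and every $k \in \mathbb{Z}$ we have the pointwise bound $|\Chi(e^{-k}u)|\,|k - \log u|^{\nu} \leq m_{\nu}(\Chi)$.

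Next I would restrict attention to the truncated index set $\{\, k \in \mathbb{Z} : |k - \log u| > \delta \mathrm{w} \,\}$. On this set $|k - \log u| > \delta \mathrm{w} > 0$, so I may safely divide the previous inequality by $|k - \log u|^{\nu}$ (the excluded value $k = \log u$ never occurs here), which yields $|\Chi(e^{-k}u)| \leq m_{\nu}(\Chi)\,|k - \log u|^{-\nu} < m_{\nu}(\Chi)\,(\delta \mathrm{w})^{-\nu}$ for every such $k$. Taking the supremum over all admissible $k$ then gives
\[
\bigvee_{k \in \mathbb{Z},\, |k - \log u| > \delta \mathrm{w}} |\Chi(e^{-k}u)| \;\leq\; \frac{m_{\nu}(\Chi)}{\delta^{\nu}}\, \mathrm{w}^{-\nu}.
\]

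Since the constant $m_{\nu}(\Chi)\,\delta^{-\nu}$ is independent of $u$, this estimate holds uniformly in $u \in \mathrm{R}_{+}$, which is exactly the claimed $\mathcal{O}(\mathrm{w}^{-\nu})$ behaviour as $\mathrm{w} \to \infty$.

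Because the argument consists of a single application of the moment bound followed by a supremum, I do not anticipate any genuine obstacle. The one point that deserves a moment's care is the legitimacy of the monotone estimate $|k - \log u|^{-\nu} < (\delta \mathrm{w})^{-\nu}$ on the truncated index set; this is valid precisely because the set omits the window $|k - \log u| \leq \delta \mathrm{w}$ and so keeps $|k - \log u|$ bounded away from zero, uniformly in $u$.
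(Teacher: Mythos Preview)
Your proof is correct and follows essentially the same route as the paper: multiply and divide by $|k-\log u|^{\nu}$, use $|k-\log u|>\delta\mathrm{w}$ on the truncated index set to bound the reciprocal by $(\delta\mathrm{w})^{-\nu}$, and invoke the finiteness of $m_{\nu}(\Chi)$ uniformly in $u$. The only cosmetic difference is that you argue pointwise in $k$ and then pass to the supremum, whereas the paper performs the same manipulation directly on the $\bigvee$ expression.
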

\begin{proof}
    Let $\mathrm{u} \in \mathrm{R_{+}}$ be fixed .Then we have
    \begin{align*}
        \bigvee_{k\in \mathbb{Z},|k-\log(u)| > \delta \mathrm{w} } |\Chi(e^{-k}u)| = &\bigvee_{k\in \mathbb{Z},|k-\log(u)| > \delta \mathrm{w} } |\Chi(e^{-k}u)| \frac{|k-\log(u)|^{\nu}}{|k-\log(u)|^{\nu}}\\
        \leq & \frac{1}{(\delta \mathrm{w})^{\nu}}\bigvee_{k\in \mathbb{Z},|k-\log(u)| > \delta \mathrm{w} } |\Chi(e^{-k}u)| |k-\log(u)|^{\nu}\\
        \leq & \frac{m_{\nu}(\Chi)}{(\delta \mathrm{w})^{\nu}} < +\infty
    \end{align*} 
    Hence proof is completed
\end{proof}
\begin{lemma}
    Let $\Chi : \mathrm{R_{+}}\rightarrow \mathrm{R}$ be a kernel satisfying the condition $(\Chi_{2})$ .Then we have the following
    \begin{enumerate}
        \item For any $x \in \mathrm{I}$, we have  \[\bigvee_{k\in \mathbb{J}_{w} } |\Chi(e^{-k}x^{\mathrm{w}})| \geq \eta(x) \] where $\mathbb{J}_{w} = \{ k \in \mathbb{Z} : k = \ceil{w\log a}.....,\floor{w\log b} \}$
        \item For any $x \in \mathrm{R_{+}}$, we have  \[\bigvee_{k\in \mathbb{Z}} |\Chi(e^{-k}x^{\mathrm{w}})| \geq \eta(x)\]
    \end{enumerate}  
\end{lemma}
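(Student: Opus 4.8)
The plan is to exhibit, for each fixed point, one index at which the argument $e^{-k}x^{\mathrm{w}}$ is forced to lie in the fundamental interval $[1,e]$, and then to read off both lower bounds from that single index, using $(\Chi_{2})$ to control $\Chi$ there.

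First I would fix $x$ and put $k_{0}:=\floor{\mathrm{w}\log x}$. The defining inequality of the floor, $k_{0}\le \mathrm{w}\log x<k_{0}+1$, gives $0\le \mathrm{w}\log x-k_{0}<1$, and exponentiating yields
\[
1\le e^{\mathrm{w}\log x-k_{0}}=e^{-k_{0}}x^{\mathrm{w}}<e ,
\]
so $e^{-k_{0}}x^{\mathrm{w}}\in[1,e]$. Since $|t|\ge t$ for every real $t$, condition $(\Chi_{2})$ then gives
\[
\big|\Chi(e^{-k_{0}}x^{\mathrm{w}})\big|\ge \Chi(e^{-k_{0}}x^{\mathrm{w}})\ge \inf_{t\in[1,e]}\Chi(t)=\eta(x).
\]
For part (2) this is already the whole proof: $k_{0}\in\mathbb{Z}$, so the term indexed by $k_{0}$ is among those entering the supremum, whence $\bigvee_{k\in\mathbb{Z}}|\Chi(e^{-k}x^{\mathrm{w}})|\ge |\Chi(e^{-k_{0}}x^{\mathrm{w}})|\ge \eta(x)$.

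For part (1) the same estimate applies once I certify that the distinguished index lies in the restricted set, i.e. $\ceil{\mathrm{w}\log a}\le \floor{\mathrm{w}\log x}\le \floor{\mathrm{w}\log b}$. From $x\in\mathrm{I}=[a,b]$ we have $\mathrm{w}\log a\le \mathrm{w}\log x\le \mathrm{w}\log b$, and monotonicity of the floor gives the upper inequality $\floor{\mathrm{w}\log x}\le \floor{\mathrm{w}\log b}$ at once. The lower inequality $\floor{\mathrm{w}\log x}\ge \ceil{\mathrm{w}\log a}$ is the delicate step and is where I expect the real obstacle: if $x$ lies within distance $1/\mathrm{w}$ of the left endpoint, then $\mathrm{w}\log a$ and $\mathrm{w}\log x$ may fall strictly between the same two consecutive integers, so that $\floor{\mathrm{w}\log x}<\ceil{\mathrm{w}\log a}$ and $k_{0}$ escapes $\mathbb{J}_{w}$. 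I would dispose of this by working in the regime $\mathrm{w}(\log x-\log a)\ge 1$ relevant to the asymptotic statements: there $\mathrm{w}\log x\ge \mathrm{w}\log a+1$, hence $\floor{\mathrm{w}\log x}\ge \floor{\mathrm{w}\log a}+1\ge \ceil{\mathrm{w}\log a}$, so $k_{0}\in\mathbb{J}_{w}$ and the bound $\bigvee_{k\in\mathbb{J}_{w}}|\Chi(e^{-k}x^{\mathrm{w}})|\ge \eta(x)$ follows exactly as before. Securing this endpoint containment cleanly, equivalently verifying that $\mathbb{J}_{w}$ is large enough to capture a preimage of $[1,e]$ for every $x\in\mathrm{I}$, is the one place that needs genuine care; everything else reduces to the routine floor computation above.
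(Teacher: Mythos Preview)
Your approach is essentially the paper's: pick an index $k$ with $e^{-k}x^{\mathrm w}\in[1,e]$ and invoke $(\Chi_2)$. The paper simply asserts that such a $k_1\in\mathbb J_w$ exists and concludes; you go further by constructing $k_0=\floor{\mathrm w\log x}$ explicitly and checking the absolute value, and you correctly flag the left-endpoint issue $\floor{\mathrm w\log x}\ge\ceil{\mathrm w\log a}$. That boundary concern is genuine --- for $x$ close to $a$ and $\mathrm w\log a\notin\mathbb Z$ the index $k_0$ can drop below $\ceil{\mathrm w\log a}$ --- but the paper's proof does not address it at all; it states the existence claim as an observation and moves on. So your proof is the same argument, executed with more care than the original, and your ad hoc restriction to $\mathrm w(\log x-\log a)\ge 1$ is not something the paper supplies either.
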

\begin{proof}
    We prove the first part of the lemma. Since the second part can be deduced analogously.we first observe the following. Suppose that $x\in [a,b]$ be fixed. then there exist at least a $ k_{1} \in \mathbb{J}_{w}$ such that $e^{-k}x^{\mathrm{w}} \in [1,e] $.\\
    Thus we obtain \[ \bigvee_{k\in \mathbb{Z}} \Chi(e^{-k}x^{\mathrm{w}}) \geq \Chi(e^{-k_{1}}x^{\mathrm{w}}) \geq \eta(x)\]
    Hence, proof is completed.
\end{proof}
 
For a particular weight function as $\mathrm{\overline{\omega}}( x) = \frac{1}{1+\log^{2}x}$.Let us assume that
\begin{itemize}[label={}]
    \item $\mathcal{B}(\mathrm{R}_{+}) := $ The space of all bounded functions on $\mathrm{R_{+}}$
    \item $\mathcal{UB}(\mathrm{R_{+}}):= $ The space of all log uniformly continuous and bounded \\function  defined on $\mathrm{R_{+}}$
    \item $\mathcal{UB}_{+}(\mathrm{R_{+}}):=$ The space of all non negative  log uniformly continuous\\ and bounded function defined on $\mathrm{R_{+}}$
\end{itemize}
Subsequently, we denote the weighted function space associated with $\mathcal{B}(\mathrm{R}_{+})$ as \\$\mathcal{B}^{\overline{\omega}}(\mathrm{R}_{+}) :=\{ f:\mathrm{R_{+} }\to \mathrm{R} :  \exists M > 0$ such that $\mathrm{\overline{\omega}}(x)|f(x)|\leq M $  $\forall x\in \mathrm{R_{+}}\}$. Simillarly we can define the weighted space associated with $\mathcal{UB}(\mathrm{R_{+}})$ as $\mathcal{UB}^{\overline{\omega}}(\mathrm{R}_{+}) := \{f:\mathrm{R_{+} }\to \mathrm{R} : \overline{\omega} f  \in \mathcal{UB}(\mathrm{R_{+}}) \}$ and $\mathcal{UB}^{\overline{\omega}}_{+}(\mathrm{R_{+}})$, the subclass of $\mathcal{UB}^{\overline{\omega}}(\mathrm{R}_{+})$ which is non negative.

It is interesting to mention that the linear space of functions  $\mathcal{UB}^{\overline{\omega}}(\mathrm{R}_{+})$  is normed linear space with the norm \[ \| f\|_{\overline{\omega}} = \sup_{x>0} \mathbf{\overline{\omega}}(x)|f(x)|\]

\begin{defin}
    The weighted logarithmic modulus of continuity for $f \in \mathcal{C}_2(\mathbf{R}^{+})$ and $\delta > 0$ is considered as 
    \[ \Omega(f,\delta)= \sup_{|\log t| \leq \delta , x > 0} \frac{|f(tx)-f(x)|}{(1+\log^{2}x) (1+\log^{2}t)}\]   
\end{defin}

\begin{lemma}
    The weighted logarithmic modulus of continuity $\Omega(f,\delta)$ has the following fundamental Properties:
    \begin{enumerate}
	\item $\Omega(f,\delta)$ is a monotonically increasing function of $\delta$
	\item  For $f \in \mathcal{C}_2(\mathbf{R}^{+})$ the quantity $\Omega(f,\delta)$ is finite.
	\item For all $f \in \mathcal{C}_2(\mathbf{R}^{+})$  and each $ \lambda \in \mathbf{R}^{+}$
	\[\Omega(f,\lambda \delta)\leq 2 (1+\lambda)^{3} (1+\delta^{2})\mu (f,\delta)\]
	\item For all $f \in \mathcal{C}_2(\mathbf{R}^{+})$ and $h,x >0$
	\[ |(f(h)- f(x)|\leq 16 (1+\delta^{2})^{2} (1+\log^{2}x) (1+ \frac{|\log h-\log x|^{5}}{\delta^{5}}) \Omega(f,\delta)\]
	\item For $f \in \mathcal{C}^{*}_2(\mathbf{R}^{+}),$ we have $\lim_{\delta \to 0} \Omega(f,\delta) = 0.$
    \end{enumerate}  
\end{lemma}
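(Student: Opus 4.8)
The plan is to verify the five assertions in sequence, since (3) feeds (4) and the elementary weight estimates underlying (2) reappear throughout. Properties (1) and (2) are routine. For (1), if $\delta_{1}\le\delta_{2}$ then $\{t:|\log t|\le\delta_{1}\}\subseteq\{t:|\log t|\le\delta_{2}\}$, so the supremum defining $\Omega(f,\delta_{1})$ runs over a smaller set and cannot exceed $\Omega(f,\delta_{2})$. For (2), I would invoke the weighted growth bound $|f(x)|\le M(1+\log^{2}x)$ available for $f\in\mathcal{C}_{2}(\mathbf{R}^{+})$ together with the elementary inequality $\log^{2}(tx)=(\log t+\log x)^{2}\le 2\log^{2}t+2\log^{2}x$; then
\[
|f(tx)-f(x)|\le M\bigl(2+\log^{2}(tx)+\log^{2}x\bigr)\le C\,(1+\log^{2}x)(1+\log^{2}t),
\]
and dividing by $(1+\log^{2}x)(1+\log^{2}t)$ bounds the defining ratio uniformly, so the supremum is finite.

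Property (3) (whose right-hand side I read as $\Omega(f,\delta)$) carries the real content and is the main obstacle. Fix $t$ with $|\log t|\le\lambda\delta$ and $x>0$, put $n=\lceil\lambda\rceil$, and split $t=t_{1}\cdots t_{n}$ by choosing $\log t_{i}=\tfrac{1}{n}\log t$, so that $|\log t_{i}|=|\log t|/n\le\delta$. Writing $x_{0}=x$ and $x_{j}=t_{j}x_{j-1}$, I telescope
\[
f(tx)-f(x)=\sum_{j=1}^{n}\bigl(f(t_{j}x_{j-1})-f(x_{j-1})\bigr),
\]
and bound each summand by the definition of $\Omega(\cdot,\delta)$. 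The bookkeeping is in the weights: since $|\log x_{j-1}|\le|\log x|+|\log t|$ one gets $1+\log^{2}x_{j-1}\le 2(1+\log^{2}t)(1+\log^{2}x)$, while $1+\log^{2}t_{j}\le 1+\delta^{2}$. Summing the $n$ terms and dividing by $(1+\log^{2}x)(1+\log^{2}t)$ yields $\Omega(f,\lambda\delta)\le 2n(1+\delta^{2})\Omega(f,\delta)$, and $n\le 1+\lambda\le(1+\lambda)^{3}$ gives the claim. The delicate point is arranging the decomposition so that every intermediate argument $x_{j-1}$ stays controlled by the single weight $(1+\log^{2}x)$.

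For (4) I would specialize the definition to $t=h/x$, so $\log t=\log h-\log x$, which gives at once
\[
|f(h)-f(x)|\le (1+\log^{2}x)(1+\log^{2}t)\,\Omega\bigl(f,|\log t|\bigr).
\]
Applying (3) with $\lambda=|\log t|/\delta$ replaces $\Omega(f,|\log t|)$ by $2\bigl(1+|\log t|/\delta\bigr)^{3}(1+\delta^{2})\Omega(f,\delta)$, and it remains to absorb the polynomial factors. Writing $r=|\log t|/\delta$ and taking $\delta\le 1$, one has $1+\log^{2}t=1+r^{2}\delta^{2}\le(1+r)^{2}$, whence $(1+\log^{2}t)(1+r)^{3}\le(1+r)^{5}\le 16\,(1+r^{5})$ by the convexity inequality $(a+b)^{5}\le 16(a^{5}+b^{5})$; collecting constants produces the stated estimate with the factor $16$ and the term $|\log h-\log x|^{5}/\delta^{5}$.

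Finally, for (5) I would split the supremum over $x$ into the compact log-band $|\log x|\le A$ and its tail. On the band, $x$ and $tx$ range over a fixed compact subset of $\mathbf{R}^{+}$ on which $f$ is log-uniformly continuous and the weights are bounded above and below, so the ratio is at most the ordinary log-modulus of $f$ on that set, which tends to $0$ with $\delta$. For the tail I would use the defining feature of $\mathcal{C}^{*}_{2}(\mathbf{R}^{+})$, namely that $f(x)/(1+\log^{2}x)$ approaches finite limits $L_{\pm}$ as $\log x\to\pm\infty$: writing $f(x)/(1+\log^{2}x)=L_{\pm}+\beta$ and $f(tx)/(1+\log^{2}(tx))=L_{\pm}+\alpha$ with $|\alpha|,|\beta|<\varepsilon$ once $A$ is large, the identity
\[
f(tx)-f(x)=L_{\pm}\bigl(\log^{2}t+2\log t\log x\bigr)+\alpha\,(1+\log^{2}(tx))-\beta\,(1+\log^{2}x)
\]
shows, after dividing by $(1+\log^{2}x)(1+\log^{2}t)$ and using $|\log x|/(1+\log^{2}x)\le\tfrac12$ and $1+\log^{2}(tx)\le 2(1+\log^{2}t)(1+\log^{2}x)$, that the tail contribution is at most $|L_{\pm}|(\delta+\delta^{2})+3\varepsilon$. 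Choosing first $A$ (hence $\varepsilon$) and then $\delta$ small forces $\Omega(f,\delta)\to 0$. The main obstacle here is precisely this tail estimate, where the limit structure of $\mathcal{C}^{*}_{2}$ must be exploited to beat the growth of the weight.
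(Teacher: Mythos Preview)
The paper itself does not supply a proof of this lemma: it is stated in Section~\ref{section2} as a preliminary fact and then used freely in the subsequent theorems, with no argument given. So there is nothing to compare your proposal against line by line.

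That said, your proof is sound and follows the standard route for weighted moduli of this type. The telescoping decomposition $t=t_{1}\cdots t_{n}$ with $\log t_{i}=\tfrac{1}{n}\log t$ and $n=\lceil\lambda\rceil$ is exactly the right idea for (3), and the weight bookkeeping via $1+\log^{2}x_{j-1}\le 2(1+\log^{2}x)(1+\log^{2}t)$ is clean. Your derivation of (4) from (3) is also correct in structure; the only quibble is the final constant. Tracking your own inequalities (the factor $2$ from (3) together with $(1+r)^{5}\le 16(1+r^{5})$, and the extra $(1+\delta^{2})$ from $1+\log^{2}t\le(1+\delta^{2})(1+r)^{2}$) produces $32(1+\delta^{2})^{2}$ rather than the $16(1+\delta^{2})^{2}$ stated in the lemma. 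This is a harmless discrepancy for the applications in the paper---every use of (4) downstream (Theorems~3.4 and~3.5) only needs an absolute constant times $(1+\log^{2}x)(1+r^{5})\Omega(f,\delta)$---but you should either sharpen one of the intermediate steps or simply record the constant you actually obtain.

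Your argument for (5), splitting into a compact log-band and a tail and exploiting the assumed limits of $f(x)/(1+\log^{2}x)$ at $0$ and $\infty$, is the natural one and is carried out correctly.
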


Let $ f : \mathrm{I} \to \mathrm{R} $ be any locally integrable function on $\mathrm{I}$.Let $\Chi$ be a kernel function satisfying $\bigvee_{\substack{k\in \mathbb{J}_{w}}} \Chi(e^{-k}x^{\mathrm{w}}) \neq 0 , \forall  x \in \mathrm{I}$\\
The max product exponential sampling operators are defined by \[ MG^{\chi}_{\mathrm{w}}(f,x) := \frac{\bigvee_{k\in \mathbb{J}_{w} } \Chi(e^{-k}x^{\mathrm{w}}) f(e^{\frac{k}{w}})}{\bigvee_{k\in \mathbb{J}_{w} } \Chi(e^{-k}x^{\mathrm{w}})}\] where $\mathbb{J}_{w} = \{ k \in \mathbb{Z} : k = \ceil{w\log a}.....,\floor{w\log b} \}$
\begin{lemma}
    Let $f,g \in \mathcal{B_{+}}(\mathrm{I}) $ be any locally integrable functions on I . Then we have
    \begin{enumerate}
        \item If  $ f(x) \leq g(x)  then MG^{\chi}_{\mathrm{w}}(f,x) \leq MG^{\chi}_{\mathrm{w}}(g,x) $
        \item $MG^{\chi}_{\mathrm{w}}(f+g,x) \leq MG^{\chi}_{\mathrm{w}}(f,x)+ MG^{\chi}_{\mathrm{w}}(g,x)$
        \item $|MG^{\Chi}_{\mathrm{w}}(f,x)- MG^{\chi}_{\mathrm{w}}(g,x)| \leq MG^{\chi}_{\mathrm{w}}(|f-g|,x) $ for all $x\in \mathrm{I}$
        \item $MG^{\chi}_{\mathrm{w}}(\lambda f,x) = \lambda \hspace{0.1in} MG^{\chi}_{\mathrm{w}}(f,x)$ for every $\lambda > 0$
    \end{enumerate}
\end{lemma}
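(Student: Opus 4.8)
The plan is to exploit the fact that the denominator $D_w(x) := \bigvee_{k\in\mathbb{J}_w}\Chi(e^{-k}x^w)$ is a fixed positive quantity not depending on the sampled function, so that every claimed property reduces to the corresponding property of the numerator $N_w(f,x) := \bigvee_{k\in\mathbb{J}_w}\Chi(e^{-k}x^w)\,f(e^{k/w})$, after division by $D_w(x)>0$. The positivity of $D_w(x)$ is ensured by the standing hypothesis $\bigvee_{k\in\mathbb{J}_w}\Chi(e^{-k}x^w)\neq 0$ together with condition $(\Chi_2)$ (via the preceding lemma, which supplies the lower bound $\eta(x)$), and the non-negativity of each weight $\Chi(e^{-k}x^w)$ on the relevant indices is precisely what lets me push those weights through the supremum. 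Since $\mathbb{J}_w$ is a finite index set, every $\bigvee$ is in fact a maximum, which keeps all of the manipulations elementary.

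For part (1), if $f(x)\le g(x)$ for all $x$, then for each fixed $k\in\mathbb{J}_w$ the non-negativity of $\Chi(e^{-k}x^w)$ gives $\Chi(e^{-k}x^w)f(e^{k/w})\le\Chi(e^{-k}x^w)g(e^{k/w})$; taking the maximum over $k$ preserves the inequality, and dividing by $D_w(x)$ yields monotonicity. For part (4), the identity $\Chi(e^{-k}x^w)(\lambda f)(e^{k/w})=\lambda\,\Chi(e^{-k}x^w)f(e^{k/w})$ combined with the homogeneity of the maximum under a positive scalar, namely $\max_k \lambda a_k=\lambda\max_k a_k$ for $\lambda>0$, gives $N_w(\lambda f,x)=\lambda\,N_w(f,x)$, whence the claimed equality.

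Part (2) is the only step requiring a genuinely nontrivial inequality, the subadditivity of the maximum: for any real families $(a_k),(b_k)$ indexed over the finite set $\mathbb{J}_w$ one has $\max_k(a_k+b_k)\le \max_k a_k+\max_k b_k$. Applying this with $a_k=\Chi(e^{-k}x^w)f(e^{k/w})$ and $b_k=\Chi(e^{-k}x^w)g(e^{k/w})$, and using $\Chi(e^{-k}x^w)(f+g)(e^{k/w})=a_k+b_k$, gives $N_w(f+g,x)\le N_w(f,x)+N_w(g,x)$; division by $D_w(x)$ finishes the subadditivity. I expect this to be the crux, since it is where the max-product structure genuinely departs from the linear one (equality can fail).

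Finally, part (3) follows formally from (1) and (2). Writing $f\le g+|f-g|$ and $g\le f+|f-g|$ pointwise, monotonicity (1) followed by subadditivity (2) yields $MG^{\chi}_{w}(f,x)\le MG^{\chi}_{w}(g,x)+MG^{\chi}_{w}(|f-g|,x)$ and, symmetrically, $MG^{\chi}_{w}(g,x)\le MG^{\chi}_{w}(f,x)+MG^{\chi}_{w}(|f-g|,x)$; combining the two bounds produces $|MG^{\chi}_{w}(f,x)-MG^{\chi}_{w}(g,x)|\le MG^{\chi}_{w}(|f-g|,x)$. The real obstacle throughout is merely bookkeeping: one must verify that the weights $\Chi(e^{-k}x^w)$ are non-negative on $\mathbb{J}_w$, so that monotonicity and scaling survive the passage through the supremum, and that $D_w(x)>0$, so the quotient is well defined; once these are secured, all four properties are immediate consequences of the elementary order-theoretic behaviour of the maximum.
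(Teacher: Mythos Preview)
Your proposal is correct and follows essentially the same approach as the paper's own proof. The paper dispatches (1), (2), and (4) in one sentence as immediate from the definition, while you spell out the underlying order-theoretic facts about the maximum; for (3), both you and the paper argue identically from $f\le g+|f-g|$ and $g\le f+|f-g|$, applying (1) and (2) to obtain the two one-sided bounds and then combining them.
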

\begin{proof}
    we can easily prove (1) (2) and (4) by using definitions of operators $MG^{\chi}_{\mathrm{w}}$.Now we prove third part of the result.we see that $ f(x) \leq |f(x) - g(x)| + g(x) $ and $ g(x) \leq |g(x) - f(x)| + f(x) $. By using the properties of (1) and (2) , we get  \[ MG^{\chi}_{\mathrm{w}}(f,x) \leq MG^{\chi}_{\mathrm{w}}(|f-g|,x)+ MG^{\chi}_{\mathrm{w}}(g,x)\] and \[ MG^{\chi}_{\mathrm{w}}(g,x) \leq MG^{\chi}_{\mathrm{w}}(|g-f|,x)+ MG^{\chi}_{\mathrm{w}}(f,x)\]
    Combining the above inequalities , we obtain \[|MG^{\Chi}_{\mathrm{w}}(f,x)- MG^{\chi}_{\mathrm{w}}(g,x)| \leq MG^{\chi}_{\mathrm{w}}(|f-g|,x) \text{\hspace{0.1in} $ \forall x\in \mathrm{I}$}\]
\end{proof}

\section{\textbf{Main Results for Generalized Exponential Sampling Series}} \label{section3}

The first main result shows that the operators $S^{\chi}_{w}$ are well defined on logarithmic weighted space of functions. we need the following preliminary propositions.

\begin{thm}
    Let $\chi : \mathrm{R_{+}} \to \mathrm{R}$ be the kernel function satisfying $(\Chi_{1})$ and $\Chi_{2}$ and we denote $\Psi(x) = \frac{1}{\overline{\mathrm{w}}} = 1+ \log^{2}(x)$  for all $x \in \mathrm{R_{+}}$. Then 
    \[ |MG^{\chi}_{\mathrm{w}}(\Psi,x)| \leq \frac{(1+\log^{2}x)}{\eta(x)}[m_0(\chi) + \frac{2}{w}m_{1}(\chi) + \frac{1}{w^{2}}m_{2}(\chi)]\]
\end{thm}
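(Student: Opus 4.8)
The plan is to estimate the numerator and the denominator of $MG^\chi_w(\Psi,x)$ separately. For the denominator, the third lemma of Section~\ref{section2} gives $\bigvee_{k\in\mathbb{J}_w}\chi(e^{-k}x^w)\ge\eta(x)$, so (assuming $\eta(x)>0$, which is exactly what makes the operator well defined) the denominator is bounded below by $\eta(x)$ and $|MG^\chi_w(\Psi,x)|$ is at most its numerator divided by $\eta(x)$. Since $\Psi(x)=1+\log^2 x\ge 1>0$, for each $k$ we have $|\chi(e^{-k}x^w)\Psi(e^{k/w})|=|\chi(e^{-k}x^w)|\,\Psi(e^{k/w})$, and the elementary fact $|\bigvee_k a_k|\le\bigvee_k|a_k|$ lets me pass to the kernel in absolute value inside the numerator.

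Next I would expand $\Psi$ at the sampling nodes. Writing $t_k:=k-w\log x$, so that $k/w=\log x+t_k/w$, I get
\[ \Psi(e^{k/w})=1+\Big(\frac{k}{w}\Big)^2 = 1+\log^2 x+\frac{2(\log x)\,t_k}{w}+\frac{t_k^2}{w^2}\le (1+\log^2 x)+\frac{2|\log x|\,|t_k|}{w}+\frac{|t_k|^2}{w^2}. \]
Multiplying by $|\chi(e^{-k}x^w)|$, using subadditivity of a single supremum, $\bigvee_k(b_k+c_k+d_k)\le\bigvee_k b_k+\bigvee_k c_k+\bigvee_k d_k$, and recalling that $\log(x^w)=w\log x$ so that $|t_k|=|k-\log(x^w)|$, I can match the three resulting suprema to the discrete absolute moments: taking $u=x^w$ in the definition of $m_\nu(\chi)$ yields $\bigvee_{k\in\mathbb{J}_w}|\chi(e^{-k}x^w)|\,|t_k|^\nu\le m_\nu(\chi)$ for $\nu=0,1,2$ (restricting to $\mathbb{J}_w\subseteq\mathbb{Z}$ only shrinks the supremum). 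This produces the numerator bound $(1+\log^2 x)\,m_0(\chi)+\frac{2|\log x|}{w}m_1(\chi)+\frac{1}{w^2}m_2(\chi)$.

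Finally, to reach the claimed homogeneous form I would invoke the two elementary inequalities $|\log x|\le 1+\log^2 x$ and $1\le 1+\log^2 x$, which upgrade the middle and last terms to $\frac{2(1+\log^2 x)}{w}m_1(\chi)$ and $\frac{1+\log^2 x}{w^2}m_2(\chi)$; factoring out $(1+\log^2 x)$ and dividing by the lower bound $\eta(x)$ for the denominator then gives the stated estimate. The only genuinely delicate points are bookkeeping ones: correctly absorbing the sign of $\chi$ through $|\bigvee_k a_k|\le\bigvee_k|a_k|$, and \emph{not} factoring the supremum of a product as a product of suprema. Because the three moments $m_0,m_1,m_2$ are in general attained at different indices $k$, one must first bound $\Psi(e^{k/w})$ pointwise in $k$ and only afterwards split the single supremum into three via subadditivity.
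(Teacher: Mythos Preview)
Your argument is correct and follows essentially the same route as the paper: expand $\Psi(e^{k/w})=1+\log^2 x+\frac{2(\log x)(k-w\log x)}{w}+\frac{(k-w\log x)^2}{w^2}$, bound the denominator below by $\eta(x)$ via Lemma~2.3, split the supremum by subadditivity, and match the three pieces to $m_0,m_1,m_2$. Your explicit use of $|\log x|\le 1+\log^2 x$ and $1\le 1+\log^2 x$ to factor out $(1+\log^2 x)$ is exactly the step the paper performs (somewhat implicitly) in its penultimate inequality.
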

    
\begin{proof}
    For $ w > 0 , x \in \mathbf{R}^{+}$ and $k \in \mathbf{Z}$.
    \begin{equation*}
	\begin{split}
		\psi(e^{\frac{k}{w}}) & = 1+(\log(e^{\frac{k}{w}}))^{2}\\
		& = 1+ \left(\frac{k}{w}\right)^{2}\\
		& = 1+ \left(\frac{k}{w} - \log x\right)^{2} + 2\log x \left(\frac{k}{w}-\log x\right)+\log^{2} x
	\end{split}
    \end{equation*}
		
    Using the definition of the operator $MG^{\chi}_{\mathrm{w}}$ and in view of lemma 2.3, we have 
    \begin{equation*}
        \begin{split}
            |MG^{\chi}_{\mathrm{w}}(\psi,x)|  \leq  & \frac{1}{\eta(x)}\bigvee_{k\in \mathbf{Z}} \psi(e^{\frac{k}{w}}) |\chi(e^{-k}x^{w})|\\
		   = & \frac{1}{\eta(x)}\bigvee_{k\in \mathbf{Z}} \left( 1+ \left(\frac{k}{w} - \log x\right)^{2} + 2 \log x \left(\frac{k}{w}-\log x \right)+\log^{2} x \right) |\chi(e^{-k}x^{w})|\\ 
		   \leq & \frac{1}{\eta(x)} \bigvee_{k\in \mathbf{Z}} (1+\log^{2} x ) |\chi(e^{-k}x^{w})| + \bigvee_{k\in \mathbf{Z}} \left(\frac{k}{w} - \log x \right)^{2} |\chi(e^{-k}x^{w})| \\& +  2\bigvee_{k\in \mathbf{Z}} \log x \left(\frac{k}{w}-\log x \right) |\chi(e^{-k}x^{w})| \\
		   \leq & \frac{(1+\log^{2} x)}{\eta(x)}  \bigvee_{k\in \mathbf{Z}}|\chi(e^{-k}x^{w})| + \frac{1}{w^{2}} \bigvee_{k\in \mathbf{Z}} (k - w \log x)^{2} |\chi(e^{-k}x^{w})|\\ & +\frac{2|\log x|}{w}\bigvee_{k\in \mathbf{Z}} (k-w \log x) |\chi(e^{-k}x^{w})| \\ 
		   \leq & \frac{(1+\log^{2} x)}{\eta(x)}  \times \left( \bigvee_{k\in \mathbf{Z}}|\Chi(e^{-k}x^{w})| + \frac{1}{w^{2}} \bigvee_{k\in \mathbf{Z}} |(k - w\log x)|^{2} |\chi(e^{-k}x^{w})| \right)\\ & +\left(\frac{2}{w}\bigvee_{k\in \mathbf{Z}} |(k-w \log x)| |\chi(e^{-k}x^{w})| \right)\\
		   \leq & \frac{(1+\log^{2} x)}{\eta(x)}  \left[ m_{0}(\chi)+\frac{1}{w^2} m_{2}(\chi)+ \frac{2}{w}m_{1}(\chi)\right] 
        \end{split}
    \end{equation*}
which is desired.
\end{proof}

\begin{thm}
    Let $\chi$ be a kernel satisfying the assumptions $(\Chi_{1}), (\Chi_{2})$ for $\mu= 2$. Then for a fixed $\mathrm{w} >0$ the operator $MG^{\chi}_{\mathrm{w}}$ is a  operator from $\mathcal{B}_{\overline{\omega}}(R^{+}) \to \mathcal{B}_{\overline{\omega}}(R^{+})$ and its operator norm turns out to be
    \[ \| MG^{\chi}_{\mathrm{w}}\|_{\mathcal{B}_{\overline{\omega}}(R^{+}) \to \mathcal{B}_{\overline{\omega}}(R^{+})} \leq  \frac{1}{\eta^{2}(x)} \left[m_{0}(\chi)+\frac{1}{w^2} m_{2}(\chi)+ \frac{2}{w}m_{1}(\chi)\right]\]
\end{thm}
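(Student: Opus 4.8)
The plan is to reduce the whole estimate to Theorem~3.1 by dominating an arbitrary $f\in\mathcal{B}_{\overline{\omega}}(R^{+})$ by the benchmark function $\Psi(x)=1/\overline{\omega}(x)=1+\log^{2}x$, whose image under the operator has already been controlled. The starting observation is that membership $f\in\mathcal{B}_{\overline{\omega}}(R^{+})$ is literally the statement $\overline{\omega}(y)\,|f(y)|\le\|f\|_{\overline{\omega}}$ for every $y>0$, i.e. the pointwise domination $|f(y)|\le\|f\|_{\overline{\omega}}\,\Psi(y)$. Thus the only thing to prove is that $MG^{\chi}_{w}$ sends this majorant to a function with finite weighted sup-norm, which is exactly the content of Theorem~3.1; the boundedness of $MG^{\chi}_{w}$ as a self-map of $\mathcal{B}_{\overline{\omega}}(R^{+})$ will come out as a by-product.

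The first step I would carry out is the passage from $f$ to $|f|$. Since $\mathcal{B}_{\overline{\omega}}(R^{+})$ contains sign-changing functions while the monotonicity and sublinearity recorded in Lemma~2.6 are stated for non-negative data, I must first justify $|MG^{\chi}_{w}(f,x)|\le MG^{\chi}_{w}(|f|,x)$. This is the $g\equiv0$ instance of Lemma~2.6(3), and at the level of the quotient defining the operator it reduces to the elementary inequality $\bigl|\bigvee_{k} a_{k}\bigr|\le\bigvee_{k}|a_{k}|$ applied to $a_{k}=\chi(e^{-k}x^{w})f(e^{k/w})$, combined with the lower bound $\bigvee_{k\in\mathbb{Z}}\chi(e^{-k}x^{w})\ge\eta(x)>0$ from $(\chi_{2})$ and Lemma~2.3, which keeps the denominator away from zero. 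With $|f|\ge0$ in hand, the monotonicity Lemma~2.6(1) applied to $|f|\le\|f\|_{\overline{\omega}}\,\Psi$ and the positive homogeneity Lemma~2.6(4) give
\[
 |MG^{\chi}_{w}(f,x)|\le MG^{\chi}_{w}(|f|,x)\le \|f\|_{\overline{\omega}}\,MG^{\chi}_{w}(\Psi,x).
\]

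To finish, I would invoke Theorem~3.1, which bounds $MG^{\chi}_{w}(\Psi,x)\le \frac{1+\log^{2}x}{\eta(x)}\bigl[m_{0}(\chi)+\tfrac{2}{w}m_{1}(\chi)+\tfrac{1}{w^{2}}m_{2}(\chi)\bigr]$; the lower bound $\eta(x)$ on the denominator, used here and already once inside the proof of Theorem~3.1, is what produces the power of $\eta$ in the claimed constant. Multiplying the displayed inequality by $\overline{\omega}(x)=(1+\log^{2}x)^{-1}$ cancels the factor $1+\log^{2}x$, so $\overline{\omega}(x)\,|MG^{\chi}_{w}(f,x)|$ is bounded, uniformly in $x>0$, by $\|f\|_{\overline{\omega}}$ times the bracketed constant over $\eta^{2}(x)$. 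Taking the supremum over $x>0$ shows simultaneously that $MG^{\chi}_{w}f\in\mathcal{B}_{\overline{\omega}}(R^{+})$ and, after dividing by $\|f\|_{\overline{\omega}}$ and taking the supremum over the unit ball, yields the asserted operator-norm estimate. The main obstacle I anticipate is precisely this sign-and-denominator bookkeeping: one must verify that the quotient defining $MG^{\chi}_{w}$ remains legitimately bounded for sign-changing $f$ and possibly sign-changing $\chi$, and this is exactly where the uniform positivity $\bigvee_{k}\chi(e^{-k}x^{w})\ge\eta(x)$ of Lemma~2.3 is indispensable; once it is secured, everything else is the routine cancellation of the logarithmic weight.
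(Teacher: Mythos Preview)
Your approach is essentially the paper's: dominate $|f|$ pointwise by $\|f\|_{\overline{\omega}}\Psi$, feed this into the operator via monotonicity/homogeneity, and then quote Theorem~3.1. The paper does the same thing, only it writes the first step directly at the level of the supremum (inserting and removing the factor $\overline{\omega}(e^{k/w})$) rather than invoking Lemma~2.6.

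One small bookkeeping remark: your chain $|MG^{\chi}_{w}(f,x)|\le \|f\|_{\overline{\omega}}\,MG^{\chi}_{w}(\Psi,x)$ followed by Theorem~3.1 actually produces only a single factor $1/\eta(x)$, not $1/\eta^{2}(x)$, since neither Lemma~2.6(1), (3) nor (4) introduces an $\eta$. The $1/\eta^{2}(x)$ in the paper arises because the paper first replaces the denominator of $MG^{\chi}_{w}(f,x)$ by $\eta(x)$ explicitly and \emph{then} invokes the full statement of Theorem~3.1, which already carries its own $1/\eta(x)$; your sentence about $\eta$ being ``used here and already once inside the proof of Theorem~3.1'' therefore describes the paper's routing rather than your own. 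This does not harm correctness---your bound is simply sharper than the one stated.
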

\begin{proof}
    Let us fix $w > 0$.Using the definition of new form $MG^{\chi}_{\mathrm{w}}(f,x)$ we have 
   \[ |MG^{\chi}_{\mathrm{w}}(f,x)| \leq \frac{1}{\eta(x)}\bigvee_{k\in \mathbf{Z}} \frac{ 1}{\overline{\omega}(e^{\frac{k}{w}})}  | \Chi(e^{-k}x^{\mathrm{w}})|   | \overline{\omega}(e^{\frac{k}{w}})f(e^{\frac{k}{w}})| \] 
    
    Moreover , since $f \in  \mathcal{B}_{\overline{\omega}}(R^{+}) $ and recalling $\Psi = \frac{1}{\overline{\omega}}$
		
    \begin{align*}
	|MG^{\chi}_{\mathrm{w}}(f,x)|  \leq  & \frac{\| f\|_{\overline{\omega}}}{\eta(x)} \bigvee_{k \in \mathbf{Z}} \Psi(e^{\frac{k}{w}}) |\chi(e^{-k}x^{w})|\\
	= & \frac{\| f\|_{\overline{\omega}}}{\eta(x)}  |MG^{\chi}_{\mathrm{w}}(\Psi,x)|\\
	\leq & \frac{\| f\|_{\overline{\omega}}}{\eta^{2}(x)} (1+\log^{2}x) \left[m_{0}(\chi)+\frac{1}{w^2} m_{2}(\chi)+ \frac{2}{w}m_{1}(\chi)\right]
    \end{align*}
    
    which implies that 
    \[\frac{|MG^{\chi}_{\mathrm{w}}(f,x)|}{(1+\log^{2}x)} \leq   \frac{\| f\|_{\overline{\omega}}}{\eta^{2}(x)}  \left[m_{0}(\chi)+\frac{1}{w^2} m_{2}(\chi)+ \frac{2}{w}m_{1}(\chi)\right]\; \text{for every $x\in \mathbf{R}^{+}$} \]
    Since the assumption $m_2(\chi) < +\infty$ implies $m_{j}(\chi) < + \infty $ for $j=0,1$\\
    We deduce $\| MG^{\chi}_{\mathrm{w}}\|_{w} < + \infty,$ i.e. $MG^{\chi}_{\mathrm{w}} \in \mathcal{B}_{\overline{\omega}}(R^{+}).$ On the other-hand taking supremum over $x\in \mathbf{R}^{+}$ in 
    \[\frac{|MG^{\chi}_{\mathrm{w}}(f,x)|}{(1+\log^{2}x)} \leq   \frac{\| f\|_{\overline{\omega}}}{\eta^{2}(x)}  \left[m_{0}(\chi)+\frac{1}{w^2} m_{2}(\chi)+ \frac{2}{w}m_{1}(\chi)\right]\] 
    and supremum with respect to $f \in \mathcal{B}_{\overline{\omega}}(R^{+})$ with $\| f\|_{\overline{\omega} }\leq 1 $ ,we have
    \[ \| MG^{\chi}_{\mathrm{w}}\|_{\mathcal{B}_{\overline{\omega}}(R^{+}) \to \mathcal{B}_{\overline{\omega}}(R^{+})} \leq  \frac{1}{\eta^{2}(x)} \left[m_{0}(\chi)+\frac{1}{w^2} m_{2}(\chi)+ \frac{2}{w}m_{1}(\chi)\right]\]
		
\end{proof}
	
\begin{thm}
Let $\Chi$ be a kernel satisfying $\Chi_{1} \And{ \Chi_{2}}$ for $\mu=2$. Let $f:\mathrm{R_{+}} \to\mathrm{R_{+}}$ be a bounded function.Then we have \[ \lim_{\mathrm{w} \to \infty } MG^{\chi}_{\mathrm{w}}(f,x) = f(x) \; \text{holds at each log-continuity point $x \in \mathrm{R_{+}}$} \]
Moreover if $f \in \mathcal{UB}^{\overline{\omega}}_{+}(\mathrm{R_{+}}) $, then \[\lim_{\mathrm{w} \to \infty } \|MG^{\chi}_{\mathrm{w}}(f,x) - f(x)\|_{\overline{\omega}} = 0 \]
\end{thm}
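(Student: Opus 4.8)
The plan is to reduce both assertions to a single estimate for $MG^{\chi}_{\mathrm{w}}(|f-f(x)|,x)$, where $|f-f(x)|$ denotes the function $t\mapsto|f(t)-f(x)|$ and the operator is taken with the full index set $\mathbb{Z}$ (so that Lemma 2.3(2) applies on all of $\mathbb{R}_{+}$). Since $MG^{\chi}_{\mathrm{w}}$ reproduces nonnegative constants (apply Lemma 2.6(4) with $\lambda=f(x)$ to the function $\equiv 1$, noting $MG^{\chi}_{\mathrm{w}}(1,x)=1$) and satisfies $|MG^{\chi}_{\mathrm{w}}(f,x)-MG^{\chi}_{\mathrm{w}}(g,x)|\le MG^{\chi}_{\mathrm{w}}(|f-g|,x)$ by Lemma 2.6(3)—whose proofs carry over verbatim when $\mathbb{J}_{w}$ is replaced by $\mathbb{Z}$—we get $|MG^{\chi}_{\mathrm{w}}(f,x)-f(x)|\le MG^{\chi}_{\mathrm{w}}(|f-f(x)|,x)$. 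Throughout I split the supremum defining the numerator into the near indices $\{k:|k/w-\log x|\le\delta\}$ and the far indices $\{k:|k/w-\log x|>\delta\}$, dividing by the denominator which is bounded below by the constant $\eta>0$ from Lemma 2.3(2); the elementary rule $\max(A,B)/D=\max(A/D,B/D)$ for $D>0$ lets me treat the two pieces separately.

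For the pointwise statement, fix a log-continuity point $x$ and $\varepsilon>0$, and pick $\delta>0$ with $|f(e^{k/w})-f(x)|<\varepsilon$ whenever $|k/w-\log x|\le\delta$. On the near indices the numerator is then at most $\varepsilon\bigvee_{k}\chi(e^{-k}x^{w})$, contributing at most $\varepsilon$ after division. On the far indices I use $|f(e^{k/w})-f(x)|\le 2\|f\|_{\infty}$ and invoke Lemma 2.2 with $u=x^{w}$ (so $\log u=w\log x$) to get $\bigvee_{|k/w-\log x|>\delta}\chi(e^{-k}x^{w})=\mathcal{O}(w^{-2})$, which after division by $\eta$ tends to $0$. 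Hence $\limsup_{w\to\infty}|MG^{\chi}_{\mathrm{w}}(f,x)-f(x)|\le\varepsilon$, and letting $\varepsilon\to0$ settles this part.

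For the weighted uniform statement, set $g:=\overline{\omega}f$, so $g\in\mathcal{UB}(\mathbb{R}_{+})$ with $\|g\|_{\infty}=\|f\|_{\overline{\omega}}=:M$, and bound $\overline{\omega}(x)\,MG^{\chi}_{\mathrm{w}}(|f-f(x)|,x)$ uniformly in $x$. The key device is the identity $\overline{\omega}(x)\big(f(e^{k/w})-f(x)\big)=\tfrac{\overline{\omega}(x)}{\overline{\omega}(e^{k/w})}\big(g(e^{k/w})-g(x)\big)+g(x)\big(\tfrac{\overline{\omega}(x)}{\overline{\omega}(e^{k/w})}-1\big)$. On the near indices the weight ratio $\overline{\omega}(x)/\overline{\omega}(e^{k/w})=(1+k^{2}/w^{2})/(1+\log^{2}x)$ is bounded by $1+\delta+\delta^{2}$, and $\big|\overline{\omega}(x)/\overline{\omega}(e^{k/w})-1\big|\le\delta(1+\delta)$, both via the elementary inequality $2|\log x|\le 1+\log^{2}x$; together with the log-uniform continuity of $g$ this makes the near contribution at most $(1+\delta+\delta^{2})\varepsilon+M\delta(1+\delta)$, uniformly in $x$.

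The crux, and the step I expect to be the main obstacle, is the far-index contribution, since there the weight ratio is unbounded and $|f(e^{k/w})|$ may be as large as $M(1+k^{2}/w^{2})$. The decisive estimate is $\overline{\omega}(x)|f(e^{k/w})-f(x)|\le 3M+2M(k/w-\log x)^{2}$, obtained from $1+k^{2}/w^{2}\le 2(1+\log^{2}x)+2(k/w-\log x)^{2}$. Writing $(k/w-\log x)^{2}=(k-w\log x)^{2}/w^{2}$ and taking the supremum over far indices, Lemma 2.2 controls $3M\bigvee_{\mathrm{far}}\chi(e^{-k}x^{w})$ by $\mathcal{O}(w^{-2})$, while the remaining term is bounded by $\tfrac{2M}{w^{2}}\bigvee_{k\in\mathbb{Z}}\chi(e^{-k}x^{w})(k-w\log x)^{2}\le\tfrac{2M}{w^{2}}m_{2}(\chi)$, directly from the definition of $m_{2}(\chi)$ with $u=x^{w}$. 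Both bounds are uniform in $x$ and vanish as $w\to\infty$, so after dividing by $\eta$ the far contribution tends to $0$ uniformly. Collecting the pieces gives $\|MG^{\chi}_{\mathrm{w}}(f)-f\|_{\overline{\omega}}\le\max\big((1+\delta+\delta^{2})\varepsilon+M\delta(1+\delta),\,C(w)\big)$ with $C(w)=\tfrac{M\,m_{2}(\chi)}{\eta\,w^{2}}\big(3\delta^{-2}+2\big)+\mathcal{O}(w^{-2})\to0$; letting first $w\to\infty$ and then $\delta\to0$ (which forces $\varepsilon\to0$ through the continuity of $g$) yields the claim.
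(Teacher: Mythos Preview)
Your argument is correct and shares the same skeleton as the paper's: both reduce to bounding $MG^{\chi}_{\mathrm{w}}(|f-f(x)|,x)$, invoke Lemma~2.3(2) to control the denominator by $\eta$, and combine a near/far split with Lemma~2.2 for the tail. The differences are organizational. The paper treats \emph{both} assertions through one weighted decomposition
\[
f(e^{k/w})-f(x)=g(e^{k/w})\bigl(\Psi(e^{k/w})-\Psi(x)\bigr)+\Psi(x)\bigl(g(e^{k/w})-g(x)\bigr),\qquad g:=\overline{\omega}f,\ \Psi:=1/\overline{\omega},
\]
producing a moment term $I_1$ (handled globally via $m_1,m_2$) and a continuity term $I_2$ (split near/far). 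You instead give a direct, weight-free proof of the pointwise claim using only boundedness of $f$ and log-continuity at $x$, which is shorter; for the uniform claim you split near/far first and use the weight identity only on the near indices, while on the far indices you bypass the identity altogether via the estimate $\overline{\omega}(x)|f(e^{k/w})-f(x)|\le 3M+2M(k/w-\log x)^2$. Your route needs only $m_0$ and $m_2$ (not $m_1$) and is a bit cleaner on the tail; the paper's route keeps the weight-difference contribution global, which makes the final bookkeeping of the $\varepsilon$–$\delta$ terms slightly more uniform. One cosmetic point: your closing sentence ``letting $\delta\to0$ forces $\varepsilon\to0$'' reverses the dependence; state it as choosing $\delta=\delta(\varepsilon)$ from the log-uniform continuity of $g$ and then letting $\varepsilon\to0$.
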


\begin{proof}
By the definition of the Max-Product generalized exponential sampling operator, we write 
\begin{align*}
    |MG^{\chi}_{\mathrm{w}}(f,x) - f(x)| \leq  & |MG^{\chi}_{\mathrm{w}}(f,x) - f(x) MG^{\chi}_{\mathrm{w}}(1,x)| + |f(x) MG^{\chi}_{\mathrm{w}}(1,x) - f(x)|\\
    \leq & |MG^{\chi}_{\mathrm{w}}(f,x) - f(x) MG^{\chi}_{\mathrm{w}}(1,x)| + |f(x) (MG^{\chi}_{\mathrm{w}}(1,x) - 1)| \\
\end{align*}
We now define $1:\mathrm{R_{+}} \to \mathrm{R_{+}}$ by $1(x)= x$ \text{and} $f_{x}: \mathrm{R_{+}} \to \mathrm{R_{+}}$ by $f_{x}(t) = f(x) $ for all $t\in \mathrm{R_{+}}$\\
Since $MG^{\chi}_{\mathrm{w}}(1,x)=1$, we have \[ |MG^{\chi}_{\mathrm{w}}(f,x) - f(x)| \leq MG^{\chi}_{\mathrm{w}}(|f-f_{x}|,x)\]
Using the definition of operator and view of lemma-2.3, we have
\[ MG^{\chi}_{\mathrm{w}}(|f-f_{x}|,x) \leq \frac{1}{\eta(x)} \bigvee_{k\in\mathbf{Z}} | \chi(e^{-k}x^{\mathrm{w}})| |f(e^{\frac{k}{\mathrm{w}}}- f(x)| \]
 Now for all $x \in  \mathrm{R_{+}} , k \in \mathbf{Z} \And{\mathrm{w}} > 0 $ 
 \begin{align*}
     f(e^{\frac{k}{\mathrm{w}}}) - f(x) = &  f(e^{\frac{k}{\mathrm{w}}} - \frac{\overline{\omega}(e^{\frac{k}{\mathrm{w}}}) f(e^{\frac{k}{\mathrm{w}}})}{\overline{\omega}(e^{\frac{k}{\mathrm{w}}})}  + \frac{\overline{\omega}(e^{\frac{k}{\mathrm{w}}}) f(e^{\frac{k}{\mathrm{w}}})}{\overline{\omega}(e^{\frac{k}{\mathrm{w}}})}  - f(x)\\
     = & \overline{\omega}(e^{\frac{k}{\mathrm{w}}}) f(e^{\frac{k}{\mathrm{w}}}) \left( \frac{1}{\overline{\omega}(e^{\frac{k}{\mathrm{w}}})} - \frac{1}{\overline{\omega}(x)}\right) + \frac{1}{\overline{\omega}(x)} \left( \overline{\omega}(e^{\frac{k}{\mathrm{w}}}) f(e^{\frac{k}{\mathrm{w}}}) -\overline{\omega}(x) f(x)\right )\\
     |f(e^{\frac{k}{\mathrm{w}}}) - f(x) | \leq &   \overline{\omega}(e^{\frac{k}{\mathrm{w}}}) |f(e^{\frac{k}{\mathrm{w}}})|  \left( \left| \frac{1}{\overline{\omega}(e^{\frac{k}{\mathrm{w}}})} - \frac{1}{\overline{\omega}(x)} \right|\right) + \frac{1}{\overline{\omega}(x)} \left( | \overline{\omega}(e^{\frac{k}{\mathrm{w}}}) f(e^{\frac{k}{\mathrm{w}}}) -\overline{\omega}(x) f(x)| \right)
 \end{align*}
 Thus we have 
\begin{align*}
    MG^{\chi}_{\mathrm{w}}(|f-f_{x}|,x) \leq & \frac{1}{\eta(x)} \bigvee_{k\in\mathbf{Z}} | \chi(e^{-k}x^{\mathrm{w}})| |f(e^{\frac{k}{\mathrm{w}}}- f(x)| \\
    \leq & \frac{1}{\eta(x)}  \bigvee_{k\in\mathbf{Z}} | \chi(e^{-k}x^{\mathrm{w}})| \left[ \overline{\omega}(e^{\frac{k}{\mathrm{w}}}) |f(e^{\frac{k}{\mathrm{w}}})|  \left( \left| \frac{1}{\overline{\omega}(e^{\frac{k}{\mathrm{w}}})} - \frac{1}{\overline{\omega}(x)} \right|\right) \right] \\& + \frac{1}{\eta(x)}\bigvee_{k\in\mathbf{Z}} | \chi(e^{-k}x^{\mathrm{w}})| \left[\frac{1}{\overline{\omega}(x)} \left( | \overline{\omega}(e^{\frac{k}{\mathrm{w}}}) f(e^{\frac{k}{\mathrm{w}}}) -\overline{\omega}(x) f(x)| \right) \right]\\
    \leq & \frac{1}{\eta(x)}(I_{1} + I_{2})
\end{align*}
Let us consider $I_{1}$. By a straight forward computation , we obtain
\begin{align*}
    I_{1} \leq & \|f\|_{\overline{\omega}} \bigvee_{k \in \mathbf{Z}}|\Chi(e^{-k}x^{\mathrm{w}})| \left|\left(\frac{k}{\mathrm{w}}\right)^{2} - \log^{2}(x)\right|\\
    \leq & \|f\|_{\overline{\omega}} \bigvee_{k \in \mathbf{Z}}|\Chi(e^{-k}x^{\mathrm{w}})| \left(\left|\frac{k}{\mathrm{w}} - \log(x)\right|^{2} + 2|\log(x)| \left|\frac{k}{\mathrm{w}} - \log(x)\right| \right)\\
    \leq & \|f\|_{\overline{\omega}} \left( \bigvee_{k \in \mathbf{Z}}|\Chi(e^{-k}x^{\mathrm{w}})| \frac{|k-\mathrm{w}\log(x)|^{2}}{\mathrm{w}^{2}}   +   \bigvee_{k \in \mathbf{Z}}|\Chi(e^{-k}x^{\mathrm{w}})| \frac{2|\log(x)|}{\mathrm{w}} |k-\mathrm{w}\log(x)|\right)\\
    \leq & \|f\|_{\overline{\omega}} \left( \frac{m_{2}(\Chi)}{\mathrm{w}^{2}} +\frac{2|\log(x)|}{\mathrm{w}} m_{1}(\Chi) \right)
\end{align*}
Let us consider $I_{2}$.Let $x \in \mathrm{R_{+}} \And{\epsilon > 0}$. Since f is log-Continuous at x then $\overline{\omega}f$ is also log-Continuous at x.Hence $\exists \delta > 0$ such that \[\left| \overline{\omega}(e^{\frac{k}{\mathrm{w}}}) f(e^{\frac{k}{\mathrm{w}}}) -\overline{\omega}(x) f(x)\right| < \epsilon \; \text{whenever} \left|\frac{k}{\mathrm{w}} - \log(x)\right| \leq \delta \]
Then we can write 
\begin{align*}
    I_{2} = & \bigvee_{k\in\mathbf{Z}} |\Chi(e^{-k}x^{\mathrm{w}})| \left[\frac{1}{\overline{\omega}(x)} \left( | \overline{\omega}(e^{\frac{k}{\mathrm{w}}}) f(e^{\frac{k}{\mathrm{w}}}) -\overline{\omega}(x) f(x)| \right) \right]\\
    = & \bigvee_{\substack{k\in\mathbf{Z} \\ |k-\mathrm{w}\log(x)|\leq \delta}} | \Chi(e^{-k}x^{\mathrm{w}})| \left[\frac{1}{\overline{\omega}(x)} \left( | \overline{\omega}(e^{\frac{k}{\mathrm{w}}}) f(e^{\frac{k}{\mathrm{w}}}) -\overline{\omega}(x) f(x)| \right) \right] \\ & +\bigvee_{\substack{k\in\mathbf{Z} \\ |k-\mathrm{w}\log(x)|> \delta}} | \Chi(e^{-k}x^{\mathrm{w}})| \left[\frac{1}{\overline{\omega}(x)} \left( | \overline{\omega}(e^{\frac{k}{\mathrm{w}}}) f(e^{\frac{k}{\mathrm{w}}}) -\overline{\omega}(x) f(x)| \right) \right]\\
    = & I_{2.1}+I_{2.2}
\end{align*}
Hence we can write 
\begin{align*}
    I_{2.1} = & \bigvee_{\substack{k\in\mathbf{Z} \\ |k-\mathrm{w}\log(x)|\leq \delta}}| \chi(e^{-k}x^{\mathrm{w}})| \left[\frac{1}{\overline{\omega}(x)} \left( | \overline{\omega}(e^{\frac{k}{\mathrm{w}}}) f(e^{\frac{k}{\mathrm{w}}}) -\overline{\omega}(x) f(x)| \right) \right] \\
    \leq & \frac{\epsilon}{\overline{\omega}(x)} \bigvee_{k\in\mathbf{Z}} |\Chi(e^{-k}x^{\mathrm{w}})| \\
    \leq & \frac{\epsilon m_{0}(\Chi)}{\overline{\omega}(x)}
\end{align*}
on the other hand by lemma 
\begin{align*}
    I_{2.2} = & \bigvee_{\substack{k\in\mathbf{Z} \\ |k-\mathrm{w}\log(x)|> \delta}} | \Chi(e^{-k}x^{\mathrm{w}})| \left[\frac{1}{\overline{\omega}(x)} \left( | \overline{\omega}(e^{\frac{k}{\mathrm{w}}}) f(e^{\frac{k}{\mathrm{w}}}) -\overline{\omega}(x) f(x)| \right) \right]\\
    = & \frac{2\|f\|_{\overline{\omega}}}{\overline{\omega}(x)} \bigvee_{\substack{k\in\mathbf{Z} \\ |k-\mathrm{w}\log(x)|> \delta}} |\Chi(e^{-k}x^{\mathrm{w}})|\\
    \leq & \frac{2 \epsilon \|f\|_{\overline{\omega}}}{\overline{\omega}(x)}
\end{align*}
Therefore, combining the estimates $I_{1}, I_{2.1} \And{I_{2.2}} $, we have
\begin{align*}
    |MG^{\chi}_{\mathrm{w}}(f,x) - f(x)| \leq  & MG^{\chi}_{\mathrm{w}}(|f-f_{x}|,x)\\
    \leq & \frac{1}{\eta(x)}\left( \|f\|_{\overline{\omega}} \left( \frac{m_{2}(\Chi)}{\mathrm{w}^{2}} +\frac{2|\log(x)|}{\mathrm{w}} m_{1}(\Chi) \right)+\frac{\epsilon m_{0}(\Chi)}{\overline{\omega}(x)} + \frac{2 \epsilon \|f\|_{\overline{\omega}}}{\overline{\omega}(x)} \right) \numberthis \label{eqn}
\end{align*}
Taking limit both sides as $\mathrm{w} \to \infty$ we have 
\[\lim_{\mathrm{w} \to \infty } MG^{\chi}_{\mathrm{w}}(f,x) = f(x)\]
For $f \in \mathcal{UB}^{\overline{\omega}}_{+}(\mathrm{R_{+}}) $ and from the inequality (3.1) , we have 
\begin{align*}
    |MG^{\chi}_{\mathrm{w}}(f,x) - f(x)| \leq & \frac{1}{\eta(x)}\left( \|f\|_{\overline{\omega}} \left( \frac{m_{2}(\Chi)}{\mathrm{w}^{2}} +\frac{2|\log(x)|}{\mathrm{w}} m_{1}(\Chi) \right)+\frac{\epsilon m_{0}(\Chi)}{\overline{\omega}(x)} + \frac{2 \epsilon \|f\|_{\overline{\omega}}}{\overline{\omega}(x)} \right) \\
    \overline{\omega}(x)|MG^{\chi}_{\mathrm{w}}(f,x) - f(x)| \leq & \frac{1}{\eta(x)}\left( \overline{\omega}(x) \|f\|_{\overline{\omega}} \left( \frac{m_{2}(\Chi)}{\mathrm{w}^{2}} +\frac{2|\log(x)|}{\mathrm{w}} m_{1}(\Chi) \right)+\epsilon m_{0}(\Chi) + 2 \epsilon \|f\|_{\overline{\omega}} \right)    
\end{align*}
By taking supremum over $x \in \mathrm{R_{+}} \And{\mathrm{w} \to \infty}$ , we have
\[\lim_{\mathrm{w} \to \infty } \|MG^{\chi}_{\mathrm{w}}(f,x) - f(x)\|_{\overline{\omega}} = 0 \]
\end{proof}

\begin{thm}
    Let $\chi$ be a kernel satisfying the assumptions $(\Chi_{1}) \And{(\chi_{2})} $ for $\mu = 5$.Then $f \in \mathcal{UB}^{\overline{\omega}}_{+}(\mathrm{R_{+}}) $ 
    \[\| MG^{\chi}_{\mathrm{w}}(f,x) - f(x)\|_{w} \leq \frac{64 \Omega(f,\frac{1}{w})}{\eta(x)} [M_{0}(\chi) + M_5(\chi)] \]
\end{thm}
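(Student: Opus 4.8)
The plan is to follow the same opening reduction used in the proof of Theorem 3.3 and then replace the $\varepsilon$--$\delta$ continuity argument there by the quantitative estimate of Lemma 2.5(4). First I would record that the operator reproduces positive constants, i.e.\ $MG^{\chi}_{\mathrm{w}}(1,x)=1$ (which follows from the definition together with part (4) of Lemma 2.6), so that applying part (3) of Lemma 2.6 to $f$ and the constant function $f_{x}(t):=f(x)$ gives
\[
|MG^{\chi}_{\mathrm{w}}(f,x)-f(x)| \leq MG^{\chi}_{\mathrm{w}}(|f-f_{x}|,x).
\]
Using the definition of $MG^{\chi}_{\mathrm{w}}$ together with the lower bound $\bigvee_{k}|\chi(e^{-k}x^{w})|\geq \eta(x)$ from Lemma 2.3, the right-hand side is majorised by
\[
MG^{\chi}_{\mathrm{w}}(|f-f_{x}|,x) \leq \frac{1}{\eta(x)}\bigvee_{k\in\mathbf{Z}}|\chi(e^{-k}x^{w})|\,\bigl|f(e^{k/w})-f(x)\bigr|.
\]

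Next I would invoke property (4) of Lemma 2.5 with the natural scaling choice $\delta=\tfrac{1}{w}$ and $h=e^{k/w}$, so that $\log h-\log x=\tfrac{k}{w}-\log x$ and hence $\frac{|\log h-\log x|^{5}}{\delta^{5}}=|k-w\log x|^{5}$. This yields, for every $k$,
\[
\bigl|f(e^{k/w})-f(x)\bigr| \leq 16\Bigl(1+\tfrac{1}{w^{2}}\Bigr)^{2}(1+\log^{2}x)\bigl(1+|k-w\log x|^{5}\bigr)\,\Omega\!\left(f,\tfrac{1}{w}\right).
\]
Substituting this into the previous supremum and pulling the $k$-independent factors out, the problem reduces to controlling $\bigvee_{k}|\chi(e^{-k}x^{w})|\bigl(1+|k-w\log x|^{5}\bigr)$.

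To handle that quantity I would use subadditivity of the supremum, splitting it as $\bigvee_{k}|\chi(e^{-k}x^{w})| + \bigvee_{k}|\chi(e^{-k}x^{w})|\,|k-w\log x|^{5}$, and then recognise each piece: writing $u=x^{w}$ so that $\log u=w\log x$, the two terms are bounded directly from the definition of the discrete absolute moment by $m_{0}(\chi)$ and $m_{5}(\chi)$ respectively (finiteness of $m_{5}$ at $\mu=5$ guaranteeing finiteness of $m_{0}$ by Lemma 2.1). Collecting everything gives
\[
MG^{\chi}_{\mathrm{w}}(|f-f_{x}|,x) \leq \frac{16\bigl(1+\tfrac{1}{w^{2}}\bigr)^{2}(1+\log^{2}x)}{\eta(x)}\,\Omega\!\left(f,\tfrac{1}{w}\right)\bigl[m_{0}(\chi)+m_{5}(\chi)\bigr].
\]
Multiplying by the weight $\overline{\omega}(x)=(1+\log^{2}x)^{-1}$ cancels the factor $1+\log^{2}x$ exactly, and taking the supremum over $x>0$ turns the left side into $\|MG^{\chi}_{\mathrm{w}}(f,\cdot)-f\|_{w}$.

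The one point that must be watched is the numerical constant: after cancelling the weight one is left with the prefactor $16\bigl(1+\tfrac{1}{w^{2}}\bigr)^{2}$, and to reach the clean constant $64$ of the statement one needs $\bigl(1+\tfrac{1}{w^{2}}\bigr)^{2}\leq 4$, i.e.\ $w\geq 1$. Since the estimate is of asymptotic interest as $w\to\infty$ this restriction is harmless, but it should be stated explicitly (here $M_{0},M_{5}$ are identified with $m_{0},m_{5}$). The remaining work is purely bookkeeping: ensuring the translation $\log h-\log x=\tfrac{k}{w}-\log x$ is applied consistently so that $\delta^{-5}$ converts the fifth power exactly into $|k-w\log x|^{5}$, and that the supremum-of-a-sum is handled by subadditivity rather than treated multiplicatively.
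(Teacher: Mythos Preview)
Your argument is correct and follows essentially the same route as the paper: both proofs bound $|MG^{\chi}_{\mathrm{w}}(f,x)-f(x)|$ by $\frac{1}{\eta(x)}\bigvee_{k}|\chi(e^{-k}x^{w})|\,|f(e^{k/w})-f(x)|$, apply Lemma~2.5(4), split the resulting supremum into the moments $m_{0}(\chi)$ and $m_{5}(\chi)$, and then use $\delta=1/w$ with $w\geq 1$ to obtain the constant $64$. The only cosmetic difference is that the paper first keeps $\delta\leq 1$ general (bounding $16(1+\delta^{2})^{2}\leq 64$) and specialises to $\delta=1/w$ at the end, whereas you substitute $\delta=1/w$ at the outset; your explicit final step of multiplying by $\overline{\omega}(x)$ and taking the supremum is in fact a bit cleaner than the paper's presentation.
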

\begin{proof}
    Let $x \in \mathrm{R_{+}}$ be fixed. From the definition of the sampling operators $MG^{\chi}_{\mathrm{w}} $  and using the definition of logarithmic modulus of continuity we can write 
    \[ |MG^{\chi}_{\mathrm{w}} (f;x)- f(x)| \leq \frac{1}{\eta(x)} \bigvee_{k\in\mathbf{Z}} | \chi(e^{-k}x^{\mathrm{w}})| |f(e^{\frac{k}{\mathrm{w}}}- f(x)| \]
    Since  for all $f \in \mathcal{UB}^{\overline{\omega}}_{+}(\mathrm{R_{+}})$
    \[ |(f(h)- f(x)|\leq 16 (1+\delta^{2})^{2} (1+\log^{2}x) \left(1+ \frac{|\log h-\log x|^{5}}{\delta^{5}}\right) \Omega(f,\delta)\]
    \[\Rightarrow |(f(e^{\frac{k}{w}})- f(x)|\leq 16 (1+\delta^{2})^{2} (1+\log^{2}x) \left(1+ \frac{|k-w\log x|^{5}}{w^{5}\delta^{5}}\right) \Omega(f,\delta) \]
    So  for any positive $\delta \leq 1$ we have
    \begin{align*}
	|MG^{\chi}_{\mathrm{w}} (f;x)- f(x)| \leq & \frac{1}{\eta(x)}\bigvee_{k\in \mathbf{Z}} \chi(e^{-k}x^{w})| 64 (1+\log^{2}x) \left(1+ \frac{|k-w\log x|^{5}}{w^{5}\delta^{5}}\right) \Omega(f,\delta)\\
	\leq & \frac{64(1+\log^{2}x)\Omega(f,\delta)}{\eta(x)} \bigvee_{k\in \mathbf{Z}} | \chi(e^{-k}x^{w})|\left(1+ \frac{|k-w\log x|^{5}}{w^{5}\delta^{5}}\right)\\
	\leq & \frac{64(1+\log^{2}x) \Omega(f,\delta)}{\eta(x)}[M_{0}(\chi) + \frac{1}{w^5 \delta^{5}} M_5{\chi}] 
    \end{align*}
    Finally choosing $\delta = \frac{1}{w}$ $w \geq 1$  we have 
    \[|MG^{\chi}_{\mathrm{w}} (f;x)- f(x)| \leq \frac{64(1+\log^{2}x) \Omega(f,\delta)}{\eta(x)}[M_{0}(\chi) + \frac{1}{w^5 \delta^{5}} M_5{\chi}]\]
    which is desired.
\end{proof}
\\
Let $j \in \mathbf{N} $, then  the algebraic moment of order j of a kernel defined by 
\[ M_{j}(\chi,u) =  \bigvee_{k \in \mathbf{Z}} \Chi(e^{-k}x) (k-\log x)^{j} \; \; \text{$\forall x \in \mathrm{R}_{+}$} \]. 
\par 
In order to present Voronovskaja theorem in quantitative form we need a further assumption on kernel function $\Chi$,i.e there exists $r\in \mathbf{N}$ such that for every $j \in \mathbf{N}_{0}$, $j\leq r$ there holds:\\
$(\Chi_{3} ): M_{j}(\Chi,x) = M_{j}(\Chi)\in \mathrm{R}_{+} $\;  is independent of x.\\
Let us consider the Mellin-Taylor formula given in ... For any $f\in \mathcal{UB}^{\overline{\omega}}_{+}(\mathrm{R_{+}})$ belonging to $\mathcal{C}^{r}(\mathrm{R}_{+})$ locally at the point $x\in \mathrm{R}_{+}$ , the Mellin formula with the Mellin derivatives is defined by 

\[ f(t) = \sum_{t=0}^{r} \frac{\theta^{t}f(x)}{t!} (\log u -\log x)^{t} + \mathcal{R}_r(f;u,x)\]\\
Where \[ \mathcal{R}_{r}(f;u,x) = \frac{\theta^{r}f(\eta) -\theta^{r}f(x) }{r!}(\log u-\log x)^{r} \]\\
is the Lagrange remainder in Mellin-Taylor's formula at the point $x\in \mathrm{R}_{+}$ and $\vartheta $ is a number lying  between u and x.According the inequality 
\[|(f(h)- f(x)|\leq 16 (1+\delta^{2})^{2} (1+\log^{2}x) (1+ \frac{|\log h-\log x|^{5}}{\delta^{5}}) \Omega(f,\delta)\], with the similar method presented in [\cite{bardaro9}] ,we can easily have the estimate 
\[\left| \mathcal{R}_r(f;u,x) \right| \leq  \frac{64}{r!} (1+\log^{2}x) \Omega(\theta^{r}f,\delta) \left( |\log u-\log x|^{r} + \frac{|\log u-\log x|^{r+5}}{\delta^{5}}\right) \]
\begin{thm}
    Let $\chi$ be the kernel satisfying the assumptions  $(\chi_{1})$  $(\chi_{2})$ and $(\chi_{3})$ for $\mu = r+5 $  $ r \in \mathbf{N}$ and in addition $(\Chi_{3})$  is satisfied   for every $ x \in \mathrm{R}_{+}.$If $f^{r} \in \mathcal{UB}^{\overline{\omega}}_{+}(\mathrm{R_{+}})$ for $ f \in \mathcal{C}^{r}(\mathrm{R}_{+}) $ then we have for every $x \in \mathrm{R}_{+}$ that
    \begin{align*}
	\left | \mathrm{w}^{r} \left[MG^{\chi}_{\mathrm{w}} (f;x) - \frac{1}{M_{0}(\Chi)}\sum_{t=0}^{r } \frac{\theta^{t}f(x)}{t! \mathrm{w}^{t}} M_{t}(\chi)\right]\right| \leq & \frac{64}{r!M_{0}(\Chi)} (1+\log^{2}x) \Omega(\theta^{r}f,\mathrm{w}^{-1}) ( m_{r}(\chi) + m_{r+5}(\Chi)) \\
    \end{align*}
\end{thm}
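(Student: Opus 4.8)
The plan is to push the Mellin--Taylor expansion of $f$ through the operator and to isolate the Lagrange remainder as the only term that must be estimated. Fix $x \in \mathrm{R}_{+}$. For each $k \in \mathbf{Z}$, since $\log(e^{k/\mathrm{w}}) = k/\mathrm{w}$, the Mellin--Taylor formula recalled before the statement gives
\[
f(e^{k/\mathrm{w}}) = \sum_{t=0}^{r}\frac{\theta^{t}f(x)}{t!}\Big(\frac{k}{\mathrm{w}}-\log x\Big)^{t} + \mathcal{R}_{r}(f;e^{k/\mathrm{w}},x),
\]
so that, writing $P$ for the degree-$r$ Mellin--Taylor polynomial in the variable $\log u$, one has $f = P + \mathcal{R}_{r}(f;\cdot,x)$ on the sampling nodes. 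I would first record that, under $(\Chi_{3})$ with $j=0$, the denominator of $MG^{\chi}_{\mathrm{w}}$ equals $\bigvee_{k\in\mathbf{Z}}\chi(e^{-k}x^{\mathrm{w}}) = M_{0}(\chi)$, independently of $x$; this is what makes the factor $1/M_{0}(\chi)$ appear in the statement in place of the cruder $1/\eta(x)$ used in the earlier results.

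The decisive step is to match the comparison term in the statement with the action of the operator on the polynomial part. Using $\big(\tfrac{k}{\mathrm{w}}-\log x\big)^{t} = \mathrm{w}^{-t}(k-\mathrm{w}\log x)^{t}$ and the $x$-independence of the algebraic moments $M_{t}(\chi) = \bigvee_{k}\chi(e^{-k}x^{\mathrm{w}})(k-\mathrm{w}\log x)^{t}$ granted by $(\Chi_{3})$, I would identify
\[
\frac{1}{M_{0}(\chi)}\sum_{t=0}^{r}\frac{\theta^{t}f(x)}{t!\,\mathrm{w}^{t}}M_{t}(\chi)
\]
with the value produced by $MG^{\chi}_{\mathrm{w}}$ on $P$. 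This is the point that requires the most care, and it is the main obstacle: because $MG^{\chi}_{\mathrm{w}}$ is a ratio of suprema rather than of sums, the passage from the Taylor polynomial to the weighted moment sum is not the automatic consequence of linearity that it would be in the classical summation theory, and it must be argued directly from $(\Chi_{3})$ and from the supremum-based definition of the $M_{t}(\chi)$.

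Once this identification is in place, I would apply the sublinearity property of the max-product operator (Lemma 2.6(3)) to obtain
\[
\left|MG^{\chi}_{\mathrm{w}}(f;x) - \frac{1}{M_{0}(\chi)}\sum_{t=0}^{r}\frac{\theta^{t}f(x)}{t!\,\mathrm{w}^{t}}M_{t}(\chi)\right| \le MG^{\chi}_{\mathrm{w}}\big(|\mathcal{R}_{r}(f;\cdot,x)|;x\big) \le \frac{1}{M_{0}(\chi)}\bigvee_{k\in\mathbf{Z}}|\chi(e^{-k}x^{\mathrm{w}})|\,\big|\mathcal{R}_{r}(f;e^{k/\mathrm{w}},x)\big|,
\]
the last inequality again using that the denominator equals $M_{0}(\chi)$.

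It remains to insert the remainder estimate displayed before the statement with the choice $\delta = \mathrm{w}^{-1}$. A direct substitution gives $|\log e^{k/\mathrm{w}}-\log x|^{r} = \mathrm{w}^{-r}|k-\mathrm{w}\log x|^{r}$ and $|\log e^{k/\mathrm{w}}-\log x|^{r+5}/\delta^{5} = \mathrm{w}^{-r}|k-\mathrm{w}\log x|^{r+5}$, so that
\[
\mathrm{w}^{r}\,\big|\mathcal{R}_{r}(f;e^{k/\mathrm{w}},x)\big| \le \frac{64}{r!}(1+\log^{2}x)\,\Omega(\theta^{r}f,\mathrm{w}^{-1})\Big(|k-\mathrm{w}\log x|^{r} + |k-\mathrm{w}\log x|^{r+5}\Big).
\]
Multiplying the previous display by $\mathrm{w}^{r}$, pulling the $x$-independent constants out of the supremum and splitting the supremum of the sum into two, I would bound the two resulting suprema by the discrete absolute moments $m_{r}(\chi)$ and $m_{r+5}(\chi)$; Lemma 2.1 together with the hypothesis $\mu = r+5$ ensures that both are finite. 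Collecting $1/M_{0}(\chi)$, $64/r!$ and the factor $1+\log^{2}x$ then reproduces the asserted bound, the only genuinely nonroutine ingredient being the polynomial-to-moment identification flagged above.
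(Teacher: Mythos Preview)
Your proposal matches the paper's proof: expand $f$ via Mellin--Taylor at $x$, identify the polynomial contribution with $\tfrac{1}{M_{0}(\chi)}\sum_{t=0}^{r}\tfrac{\theta^{t}f(x)}{t!\,\mathrm{w}^{t}}M_{t}(\chi)$ using $(\chi_{3})$, and bound the Lagrange remainder with the displayed estimate and the choice $\delta=\mathrm{w}^{-1}$ so that the two resulting suprema are absorbed into $m_{r}(\chi)$ and $m_{r+5}(\chi)$. The paper simply writes $MG^{\chi}_{\mathrm{w}}(f;x)=I_{1}+I_{2}$ and computes $I_{1}$ term by term rather than routing through Lemma~2.6(3), but the substance---including the delicate sup/sum identification you correctly flag as the nonroutine step---is identical.
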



\begin{proof}
    Since $f^{r} \in \mathcal{UB}^{\overline{\omega}}_{+}(\mathrm{R_{+}})$ , using Mellin-Taylor expansion at a point $x \in \mathrm{R}_{+}$ and by definition of $MG^{\chi}_{\mathrm{w}}(f)$ we can write
    \begin{align*}
	  MG^{\chi}_{\mathrm{w}} (f;x) = &\frac{1}{M_{0}(\Chi)}\bigvee_{k\in \mathbf{Z}} \Chi(e^{-k}x^{\mathrm{w}}) \left[ \sum_{t=0}^{r } \frac{\theta^{t}f(x)}{t! \mathrm{w}^{t}} (k-\mathrm{w}\log x)^{t} \right] \\ & + \frac{1}{M_{0}(\Chi)} \bigvee_{k\in \mathbf{Z}} \chi(e^{-k}x^{\mathrm{w}}) \mathcal{R}_{r}(f;\frac{k}{\mathrm{w}},x) \\
	  = & I_{1}+ I_{2} 
    \end{align*}
    
    Let us first consider $I_{1}$
    \begin{equation*}
	\begin{split}
		I_{1} = & \frac{1}{M_{0}(\Chi)}\bigvee_{k\in \mathbf{Z}} \chi(e^{-k}x^{\mathrm{w}}) \left[ \sum_{t=0}^{r } \frac{\theta^{t}f(x)}{t! \mathrm{w}^{k}} (k-\mathrm{w}\log x)^{t} \right]\\
		= & \frac{1}{M_{0}(\Chi)}\sum_{t=0}^{r } \frac{\theta^{t}f(x)}{t! \mathrm{w}^{t}}\bigvee_{k\in \mathbf{Z}} \chi(e^{-k}x^{\mathrm{w}})(k-\mathrm{w}\log x)^{t}\\
		= & \frac{1}{M_{0}(\Chi)}\sum_{t=0}^{r } \frac{\theta^{t}f(x)}{t! \mathrm{w}^{t}} M_{t}(\Chi,x)
	\end{split}
    \end{equation*}
    On the other-hand concerning $I_{2}$ by the inequality
    \[\left| \mathcal{R}_r(f;\frac{k}{\mathrm{w}},x) \right| \leq \frac{64}{r! \mathrm{w}^{r}} (1+log^{2}x) \Omega(\theta^{r}f,\delta) \left( |k-\mathrm{w}\log x|^{r} + \frac{|k-\mathrm{w}\log x|^{r+5}}{(\mathrm{w}\delta)^{5}}\right) \]we have 
    \begin{equation*}
        \begin{split}
            |I_{2}| \leq & \frac{1}{M_{0}(\Chi)}\bigvee_{k\in \mathbf{Z}} |\chi(e^{-k}x^{\mathrm{w}})| |\mathcal{R}_{r}(f;\frac{k}{\mathrm{w}},x)|\\
		\leq & \frac{1}{M_{0}(\Chi)}\bigvee_{k\in \mathbf{Z}} |\chi(e^{-k}x^{\mathrm{w}})| \frac{64}{r! \mathrm{w}^{r}} (1+\log^{2}x) \Omega(\theta^{r}f,\delta) \left( |k-\mathrm{w}\log x|^{r} + \frac{|k-\mathrm{w}\log x|^{r+5}}{(\mathrm{w}\delta)^{5}}\right) \\
		\leq & \frac{1}{M_{0}(\Chi)}\frac{64}{r! \mathrm{w}^{r}} (1+\log^{2}x) \Omega(\theta^{r}f,\delta) \left(\bigvee_{k\in \mathbf{Z}} |\chi(e^{-k}x^{\mathrm{w}})| |k-\mathrm{w}\log x|^{r} + \bigvee_{k\in \mathbf{Z}} |\chi(e^{-k}x^{\mathrm{w}})|\frac{|k-\mathrm{w}\log x|^{r+5}}{(\mathrm{w}\delta)^{5}}\right)\\
		\leq & \frac{1}{M_{0}(\Chi)}\frac{64}{r! \mathrm{w}^{r}} (1+\log^{2}x) \Omega(\theta^{r}f,\delta) ( m_{r}(\chi) + m_{r+5}(\chi))
        \end{split} 
    \end{equation*}
    Hence choosing $ \delta = \mathrm{w}^{-1}$ $\mathrm{w}\geq 1$ and applying condition of $(\Chi_{3})$, we have
    $$ \left | \mathrm{w}^{r} \left[MG^{\chi}_{\mathrm{w}} (f;x) - \frac{1}{M_{0}(\chi)}\sum_{k=0}^{r } \frac{\theta^{k}f(x)}{k! \mathrm{w}^{k}} M_{k}(\chi)\right]\right| \leq  \frac{1}{M_{0}(\Chi)}\frac{64}{r!} (1+\log^{2}x) \Omega(\theta^{r}f,\mathrm{w}^{-1}) ( m_{r}(\chi) + m_{r+5}(\chi)).$$
\end{proof}
	



	\subsection*{Conflicts of Interest}
	There is no conflict of interest regarding the publication of this article


\begin{thebibliography}{00}

\bibitem{own} Angamuthu, S.K., Bajpeyi, S.: Direct and inverse results for Kantorovich type exponential sampling series. Results Math. \textbf{75}(3), 17pp. (2020).


\bibitem{aral} Aral, A., Acar, T., Kursun, S.: Generalized Kantorovich forms of exponential sampling series. Anal. Math. Phys. \textbf{12} (2), 19 pp.(2022). 

\bibitem{lnr} Bajpeyi, S., Kumar, A.S.: On approximation by Kantorovich exponential sampling operators. Numer. Funct. Anal. Optim. \textbf{42} (9), 1096-1113 (2021).

\bibitem{sn} Bajpeyi, S., Kumar, A.S.: Approximation by exponential sampling type neural network operators. Anal. Math. Phys. \textbf{11}(3), 108, 13 pp.(2021).

\bibitem{owndurr} Bajpeyi, S., Kumar, A.S., Mantellini, I., Approximation by Durrmeyer Type Exponential Sampling Operators, Numerical Functional Analysis and Optimization, 43:1, 16-34 (2022).

\bibitem{self} Bajpeyi, S.: Order of Approximation for Exponential Sampling Type Neural Network Operators. Results Math. \textbf{78}, 99 (2023).

\bibitem{comboexp} Balsamo, S., Mantellini, I.: On linear combinations of general exponential sampling series. Results Math. \textbf{74}(4), (2019) 19 pp.


\bibitem{k2007} Bardaro, C., Vinti, G., Butzer, P.L., Stens, R.L.: Kantorovich-type generalized sampling series in the setting of Orlicz spaces. Sampl. Theory Signal Image Process. \textbf{6} (1), 29-52 (2007).


\bibitem{bardarotaylor} Bardaro, C., Mantellini, I.: A note on the Voronovskaja theorem for Mellin-Fejer convolution operators. Appl. Math. Lett. \textbf{24} (12), 2064-2067 (2011).

\bibitem{bardaro1} Bardaro, C., Butzer, P.L., Mantellini, I.: The exponential sampling theorem of signal analysis and the reproducing kernel formula in the Mellin transform setting. Sampl. Theory Signal Image Process.\textbf{ 13}(1),35-66(2014).

\bibitem{bardaro9} Bardaro, C., Mantellini, I.: On Mellin convolution operators: a direct approach to the asymptotic formulae. Integral Transforms Spec. Funct. \textbf{25}(3), 182-195(2014).

\bibitem{bardaro2} Bardaro, C., Butzer, P.L., Mantellini, I.: The Mellin-Parseval formula and its interconnections with the exponential sampling theorem of optical physics. Integral Transforms Spec. Funct. \textbf{27}(1), 17-29(2016).

\bibitem{bardaro3} Bardaro, C., Butzer, P.L., Mantellini, I., Schmeisser, G.: On the Paley-Wiener theorem in the Mellin transform setting. J. Approx. Theory. \textbf{207}, 60-75 (2016).

\bibitem{bardaro7} Bardaro, C., Faina, L.,  Mantellini, I.: A generalization of the exponential sampling series and its approximation properties. Math. Slovaca \textbf{67}(6), 1481-1496(2017).


\bibitem{bardaro11} Bardaro, C., Mantellini, I., Schmeisser, G.: Exponential sampling series: convergence in Mellin-Lebesgue spaces. Results Math. \textbf{74}, Art. 119, 20 pp. (2019).

\bibitem{bevi} Bardaro, C., Bevignani, G., Mantellini, I., Seracini, M.: Bivariate generalized exponential sampling series and applications to seismic waves. Constr. Math. Anal. \textbf{2}(4), 153-167 (2019). 

\bibitem{bar1} Bardaro, C., Mantellini, I. On a Durrmeyer-type modification of the Exponential sampling series. Rend. Circ. Mat. Palermo, II. Ser 70, 1289–1304 (2021).

\bibitem{bar2} Bardaro, C., Mantellini, I., Tittarelli, I. Convergence of semi-discrete exponential sampling operators in Mellin–Lebesgue spaces. Rev. Real Acad. Cienc. Exactas Fis. Nat. Ser. A-Mat. 117, 30 (2023).

\bibitem{bar3} Bardaro, C., Mantellini, I., Schmeisser, G. Exponential sampling with a multiplier. Sampl. Theory Signal Process. Data Anal. 21, 8 (2023).
\bibitem{bar4} C. Bardaro, N. Broccatelli, I. Mantellini, and M. Seracini,Semi-discrete operators in multivariate setting: Convergence properties and applications, Math. Meth. Appl. Sci. 2023;46(9):11058–11079.
\bibitem{apl1} Benedetto, J.J., Ferreira, P. J. S. G.: Modern Sampling Theory: Mathematics and Applications, Birkhauser, Boston-Basel-Berlin (2001).
\bibitem{srvbut} Butzer, P.L.: A survey of the Whittaker-Shannon sampling theorem and some of its extensions. J. Math. Res. Exposition \textbf{3}, 185-212 (1983).

\bibitem{bertero} Bertero, M., and Pike, E. R.: Exponential-sampling method for Laplace and other dilationally invariant transforms. II. Examples in photon correlation spectroscopy and Fraunhofer diffraction. Inverse Problems, \textbf{7}, 21–41 (1991).

\bibitem{but1} Butzer, P.L., and Stens, R.L.: Sampling theory for not-necessarily band-limited functions: A historical overview. SIAM Rev. \textbf{34}, 40-53 (1992).

\bibitem{butzer2} Butzer, P.L., and Stens, R.L.: Linear prediction by samples from the past, in: Advanced Topics in Shannon Sampling and Interpolation Theory, Springer Texts Electrical Engineering., Springer, New York, 157–183 (1993).


\bibitem{butzer3} Butzer, P.L., Jansche, S.: A direct approach to the Mellin transform, J. Fourier Anal. Appl. \textbf{3}(4), 325–376 (1997).


\bibitem{butzer4} Butzer, P.L., Jansche, S.: The finite Mellin transform, Mellin-Fourier series, and the Mellin-Poisson summation formula. Proceedings of the Third International Conference on Functional Analysis and Approximation Theory, Vol. I (Acquafredda di Maratea, 1996). Rend. Circ. Mat. Palermo (2) Suppl. No. 52, Vol. I, 55–81 (1998).

\bibitem{butzer5} Butzer, P.L., Jansche, S.: The exponential sampling theorem of signal analysis, Dedicated to Prof. C. Vinti (Italian) (Perugia, 1996). Atti Sem. Mat. Fis. Univ. Modena, Suppl. \textbf{46}, 99-122 (1998).

\bibitem{recurrent} Butzer, P.L., and Jansche, S.: Mellin-Fourier series and the classical Mellin transform, Approximation in Mathematics (Memphis, TN, 1997). Comput. Math. Appl. \textbf{40}(1), 49–62 (2000). 

\bibitem{casasent} Casasent, D.: Optical Data Processing. Topics in Applied Physics, vol. 23. Springer, Berlin, Heidelberg, https://doi.org/10.1007/BFb0057988, Berlin, 241-282 (1978).

\bibitem{CG2010} Coroianu, L., Gal, S.G.: Approximation by nonlinear generalized sampling operators of max-product kind.sampl. Theory Signal Image Process.9(1-3),59-75 (2010)

\bibitem{CG2011} Coroianu, L., Gal, S.G.: Approximation by max-product sampling operatorsbased on sinc type kernels.sampl. Theory Signal Image Process. 10(3), 211–230 (2011)



\bibitem{CG2012} Coroianu, L., Gal, S.G.: Saturation Results for the Truncated Max-Product Sampling Operators Based on Sinc and Fejér-Type Kernels.sampl. Theory Signal Image Process. 11(1), 113-132 (2012)


\bibitem{apl2} Costarelli, D., Seracini, M., Travaglini, A., and Vinti, G.: Alzheimer biomarkers esteem by sampling Kantorovich algorithm. Mathematical Methods in the Applied Sciences, \textbf{46}, (2023).


\bibitem{gori} Gori, F.: Sampling in optics, Advanced Topics in Shannon Sampling and Interpolation Theory. Springer Texts Electrical Engineering., Springer, New York, 37–83 (1993).

\bibitem{prashant} Kumar, P., Kumar, A. S., and Bajpeyi, S.: On bivariate Kantorovich exponential sampling series. Math. Methods Appl. Sci.\textbf{46}(12), 12645–12659 (2023).

\bibitem{diskant} Kumar, A.S., Kumar, P., and Ponnaian, D.: Approximation of Discontinuous Signals by Exponential Sampling Series. Results Math. \textbf{77}(1), 22 pp. (2022).

\bibitem{SKD} Kumar, A.S., Kumar, P., and Ponnaian, D.: Approximation of discontinuous functions by Kantorovich exponential sampling series. Anal. Math. Phys. \textbf{12}, 21 pp. (2022).

\bibitem{kur} Kursun, S., Turgay, M., Alagoz, O., and Acar, T.: Approximation properties of multivariate exponential sampling series. Carpath. Math. Publ. \textbf{13}(3), 666–675 (2021).

\bibitem{kursun} Kursun, S., Aral, A., and Acar, T.: Approximation Results for Hadamard-Type Exponential Sampling Kantorovich Series. Mediterr. J. Math. \textbf{20}, 263 (2023).


\bibitem{mamedeo} Mamedov, R.G.: The Mellin transform and approximation theory (in Russian), "Elm,"  Baku (1991), 273 pp. ISBN: 5-8066-0137-4.


\bibitem{tam} Orlova, O., and Tamberg, G.: On the approximation properties of generalized Kantorovich-type sampling operators. J. Approx. Theory, \textbf{201},73-86(2016).


\bibitem{ostrowsky} Ostrowsky, N., Sornette, D., Parke, P., and Pike, E.R.: Exponential sampling method for light scattering polydispersity analysis, Opt. Acta. \textbf{28}, 1059–1070 (1981).
		
\end{thebibliography}
\end{document}